\documentclass[reqno,12pt,onepage]{amsart}
\usepackage{amsthm,amssymb,amsmath, tikz}
\usepackage[pdftex]{hyperref}
\theoremstyle{definition}
\newtheorem{defi}{Definition}[section]
\theoremstyle{remark}
\newtheorem{exam}[defi]{Example}
\theoremstyle{plain}
\newtheorem{thm}[defi]{Theorem}
\newtheorem{lem}[defi]{Lemma}

\newtheorem{prop}[defi]{Proposition}
\newtheorem{rem}[defi]{Remark}

\begin{document}

\title{Local Homeomorphisms that $*$-commute with the Shift}
\author{Paulette N. Willis}

\address{Department of Mathematics \\ University of Houston \\ Houston, TX 77204-3008 \\ USA}
\email{pnwillis@math.uh.edu}

\thanks{This research was partially supported by the University of Iowa Graduate College Fellowship as
part of the Sloan Foundation Graduate Scholarship Program and the University of Iowa Department of Mathematics NSF VIGRE grant DMS-0602242}

\date{\today}

\subjclass[2010]{37B10, 37B15}

\keywords{shift space, $*$-commuting, local homeomorphism}

\begin{abstract}
Exel and Renault proved that a sliding block code on a one-sided shift space coming from a progressive block map is a local homeomorphism.  We provide a counterexample showing that the converse does not hold.  We use this example to generalize the notion of progressive to a property of block maps we call \emph{weakly progressive}, and we prove that a sliding block code coming from a weakly progressive block map is a local homeomorphism.  We also introduce the notion of a \emph{regressive} block map and prove that a sliding block code $*$-commutes with the shift map if and only if it comes from a regressive block map.  We also prove that a sliding block code is a local homeomorphism and $*$-commutes with the shift map if and only if it is a $k$-fold covering map defined from a regressive block map.
\end{abstract}

\date{\today}

\numberwithin{equation}{section}
\maketitle

\section{Introduction}

In symbolic dynamics one considers spaces of sequences with entries from a finite alphabet $A$ together with a shift map on the space.  There are two versions of this theory, one that considers the space of two-sided infinite sequences $A^\mathbb{Z}$ with a shift map $\sigma$ that is a homeomorphism, and one that considers the space of one-sided infinite sequences $A^\mathbb{N}$ with a shift map $\sigma$ that is a local homeomorphism.  

Morphisms between shift spaces are called sliding block codes, and one can prove that any such morphism $\tau_d$ comes from a block map $d : A^n \to A$  (see \cite{Hed69, LM95, bK98}).  In \cite[Theorem 3.4]{Hed69} Hedlund proved that the morphisms on two-sided infinite sequences are precisely functions of the form $\sigma^k \tau_d$ for some $k \in \mathbb{Z}$.  In Section~\ref{CF} of this paper we characterize morphisms on one-sided infinite sequences as functions of the form $\tau_d$ (Theorem~\ref{classify}).  More specifically, the morphisms on one-sided infinite sequences are functions $\tau_d: A^{\mathbb{N}} \to A^{\mathbb{N}}$ defined from a block map $d: A^n \to A$, for some $n \in \mathbb{N}$, by $(\tau_d(x))_i=d(x_i \cdots x_{i+n-1})$.  While this result has been stated on several occasions in the literature we include a proof for completeness.

The first main topic of this paper is to consider when a sliding block code on the one-sided shift space is a local homeomorphism.  To date, there is no known characterization.  In \cite[Theorem 14.3]{ER07} Exel and Renault proved that if the block map $d$ that defines $\tau_d$ is a \emph{progressive} function (Definition~\ref{prog}), then $\tau_d$ is a local homeomorphism.  At the time, it was not known if the converse is true. In Section~\ref{LH} we provide a counterexample to the converse (Example~\ref{counterex}).  We use this counterexample as motivation to generalize the idea of a progressive block map and introduce what we call a \emph{weakly progressive} block map (Definition~\ref{weakprog}).  We prove that if the block map $d$ is weakly progressive then $\tau_d$ is a local homeomorphism (Theorem~\ref{wplocalhomeo}).  This gives weaker hypothesis under which we can conclude that $\tau_d$ is a local homeomorphism.  We do not yet know if the converse is true; that is, we do not know if a sliding block code that is a local homeomorphism must come from a weakly progressive block map.

The second main topic of this paper is to examine sliding block codes that $*$-commute with the shift.  The concept of two functions $*$-commuting was introduced in \cite{AR} and further examined in \cite{ER07}.  Let $X$ be a topological space.  Then two functions $S, T:X \to X$ \emph{$*$-commute} if they commute and for all $y,z \in X$ with $S(y)=T(z)$ there exists a unique $x \in X$ such that $T(x)=y$ and $S(x)=z$.  In Section~\ref{SC} we introduce the concept of a \emph{regressive} block map (Definition~\ref {defi-reg}) and characterize the sliding block codes that $*$-commute with the shift as those for which the block map $d$ is regressive.

Local homeomorphisms that $*$-commute with the shift have interesting properties that we discuss in Section~\ref{AI}.  In particular, we prove that a sliding block code is a local homeomorphism and $*$-commutes with the shift if and only if $\phi: A^{\mathbb{N}} \to A^{\mathbb{N}}$ is a $k$-fold covering map coming from a regressive block map (Theorem~\ref{last}).  

The author thanks Ruy Exel for many enlightening discussions on the material.

\textbf{Notation and conventions:}  Throughtout this paper we let $A$ be a finite alphabet. Give $A$ the discrete topology, then $A$ is a compact Hausdorff space.  Let $A^n$ denote the words of length $n$, let $A^* :=\bigcup_{n \geq 1} A^n$, and let $A^{\mathbb{N}}$ denote the one-sided infinite sequence space of elements in $A$.  Since $A$ is a compact Hausdorff space, $A^{\mathbb{N}}$ with the product topology is also a compact Hausdorff space by Tychonoff's Theorem.  For $\mu \in A^*$ we define a \emph{cylinder set} $Z(\mu) := \{ x \in A^{\mathbb{N}}: x_1 \cdots x_{|\mu|}= \mu \}$.  Note that the family $\{Z(\mu): \mu \in A^* \}$ is a basis for $A^{\mathbb{N}}$ and each $Z(\mu)$ is clopen (and therefore compact).  Let $\sigma:A^{\mathbb{N}} \to A^{\mathbb{N}}$ defined by $\sigma(x_1 x_2 x_3 \cdots)= x_2 x_3 \cdots$ be the shift map.

\section{Continuous functions that commute with the shift} \label{CF}

In this section we provide a proof of the fact that a function on the one-sided shift space that is continuous and commutes with the shift is a sliding block code.  Our proof is constructive in the sense that given an arbitrary continuous function $\phi: A^{\mathbb{N}} \to A^{\mathbb{N}}$ that commutes with the shift map $\sigma$, Lemma~\ref{phicont} and Proposition~\ref{prop-d-exists} demonstrate how to construct a block map that defines $\phi$.

\begin{defi} \label{defitaud}
A \emph{block map} is a function $d: A^n \to A$ for some $n \in \mathbb{N}$.  For any block map $d$ we define a function $\tau_d: A^{\mathbb{N}} \to A^{\mathbb{N}}$ by $\tau_d(x)_i= d(x_i \cdots x_{i+n-1})$.  We call $\tau_d$ a \emph{sliding block code}.
\end{defi}

\begin{rem} \label{nonuni}
Define $d: A^2 \to A$ by $d(a_1 a_2)= a_2$.  Then $\tau_d= \sigma$.  Note that $\tau_d$ is not uniquely determined by the choice of $d$.  For example, define $d': A^3 \to A$ by $d'(a_1 a_2 a_3)=a_2$.  Then $\tau_{d'}= \sigma = \tau_d$.
\end{rem}

\begin{lem} \label{taud}
If $\phi: A^{\mathbb{N}} \to A^{\mathbb{N}}$ is a sliding block code, then $\phi$ is continuous and commutes with the shift map $\sigma$.
\end{lem}

\begin{proof}
Since $\phi$ is a sliding block code there exists $n \in \mathbb{N}$ and a block map $d: A^n \to A$ such that $\tau_d=\phi$.  Let $\mu \in A^*$.  If $x= x_1 x_2 x_3 \cdots \in \tau_d^{-1}(Z(\mu))$, then one can show that $x \in Z(x_1 x_2 \cdots x_{k+n-1}) \subseteq \tau_d^{-1}(Z(\mu))$ so $\tau_d^{-1}(Z(\mu))$ is open.  Therefore $\tau_d$ is continuous.  Let $x \in A^{\mathbb{N}}$ and observe
\begin{align*}
\tau_d \sigma (x) &=\tau_d(x_2 x_3 x_4 \cdots)=d(x_2 \cdots x_{n+1}) d(x_3 \cdots x_{n+2}) \cdots \\
                            &=\sigma(d(x_1 \cdots x_n)d(x_2 \cdots x_{n+1}) d(x_3 \cdots x_{n+2}) \cdots) = \sigma \tau_d(x).
\end{align*}
\end{proof}

\begin{rem} \label{samelength}
For each $\mu \in A^*$, $Z(\mu)= \bigsqcup_{a \in A} Z(\mu a)$.  Recall that since $A^{\mathbb{N}}$ is compact, any open set $U \subseteq A^{\mathbb{N}}$ is a finite union of basis elements.  Let $S$ be a finite subset of $A^*$.  Then for any open set $U = \bigcup_{\mu \in S} Z(\mu)$, we may extend the lengths of the $\mu \in S$ so that we may find $n \in \mathbb{N}$ and a finite set $T \subseteq A^n$ such that $\bigcup_{\mu \in S} Z(\mu)= \bigsqcup_{\nu \in T} Z(\nu)$.  Note that for any $\mu \in S$, $| \mu | \leq n$.  Therefore for any $\nu \in T$ there exists $\mu \in S$ such that $Z(\nu) \subseteq Z(\mu)$.
\end{rem}

\begin{lem} \label{phicont}
If $\phi: A^{\mathbb{N}} \to A^{\mathbb{N}}$ is a continuous function, then there exists $n \in \mathbb{N}$ such that for each $\lambda \in A^n$ there exists a unique $a \in A$ such that $Z(\lambda) \subseteq \phi^{-1}(Z(a))$.
\end{lem}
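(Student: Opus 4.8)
The plan is to combine the continuity of $\phi$ with the compactness of $A^{\mathbb{N}}$; this is essentially a repackaging of Remark~\ref{samelength}. Uniqueness requires no work and no hypothesis on $n$: for any word $\lambda$ the cylinder $Z(\lambda)$ is nonempty, while for distinct $a,b \in A$ we have $Z(a) \cap Z(b) = \emptyset$ and hence $\phi^{-1}(Z(a)) \cap \phi^{-1}(Z(b)) = \emptyset$, so $Z(\lambda)$ cannot sit inside both $\phi^{-1}(Z(a))$ and $\phi^{-1}(Z(b))$. So the content is existence of a suitable common length $n$.

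For existence I would argue as follows. Fix $x \in A^{\mathbb{N}}$ and set $a = \phi(x)_1$, so that $x \in \phi^{-1}(Z(a))$. Since each $Z(a)$ is clopen and $\phi$ is continuous, $\phi^{-1}(Z(a))$ is open, so there is a basic (clopen) neighborhood $Z(\mu_x) \ni x$ with $Z(\mu_x) \subseteq \phi^{-1}(Z(a))$. The family $\{Z(\mu_x) : x \in A^{\mathbb{N}}\}$ is then an open cover of the compact space $A^{\mathbb{N}}$, so it has a finite subcover $Z(\mu_{x_1}), \dots, Z(\mu_{x_m})$; put $n := \max_{1 \le j \le m} |\mu_{x_j}|$. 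Given $\lambda \in A^n$, pick any $y \in Z(\lambda)$; then $y \in Z(\mu_{x_j})$ for some $j$, and since $|\mu_{x_j}| \le n = |\lambda|$ the word $\mu_{x_j}$ must be a prefix of $\lambda$, whence $Z(\lambda) \subseteq Z(\mu_{x_j}) \subseteq \phi^{-1}(Z(a))$ for the corresponding $a$. This furnishes the required $a \in A$ for every $\lambda \in A^n$.

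The one point to handle with care is obtaining a single $n$ that works simultaneously for all of $A^n$: a priori each point $x$ only supplies a local cylinder neighborhood of some length depending on $x$, and it is precisely the passage to a finite subcover (using compactness of $A^{\mathbb{N}}$) that lets us take the maximum of finitely many lengths. The remaining prefix observation --- that $|\mu_{x_j}| \le |\lambda|$ together with $Z(\lambda) \cap Z(\mu_{x_j}) \ne \emptyset$ forces $Z(\lambda) \subseteq Z(\mu_{x_j})$ --- is routine. An alternative route is to note that $\{\phi^{-1}(Z(a))\}_{a \in A}$ is a finite clopen partition of $A^{\mathbb{N}}$ and apply Remark~\ref{samelength} to write each piece as a disjoint union of cylinders of a common length; I would present whichever of the two is cleaner in context.
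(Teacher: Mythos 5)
Your proof is correct and uses essentially the same idea as the paper: continuity makes each $\phi^{-1}(Z(a))$ clopen, compactness of $A^{\mathbb{N}}$ yields finitely many cylinders, and taking a common length $n$ (your max over a finite subcover, versus the paper's two applications of Remark~\ref{samelength}) gives the result. Your up-front observation that uniqueness is automatic from the disjointness of the $\phi^{-1}(Z(a))$ is a clean simplification of the paper's bookkeeping, but the route is the same.
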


\begin{proof}
Consider $\{Z(a) \, | \, a \in A \}$.  Notice that the $Z(a)$'s are disjoint clopen sets that cover $A^{\mathbb{N}}$.  Define $V_a = \phi^{-1}(Z(a))$.  Then the $V_a$'s are also disjoint clopen sets that cover $A^{\mathbb{N}}$.  Since $A^{\mathbb{N}}$ is compact and the $V_a$'s are closed, the $V_a$'s are also compact.   Therefore each $V_a$ is the union of a finite number of basis elements.  That is, $V_a = \bigcup_{\mu \in S_a} Z(\mu)$, where $S_a$ is a finite subset of $A^*$.  By Remark \ref{samelength} there exists $m \in \mathbb{N}$ and a finite set $T_a \subseteq A^m$ such that $V_a = \bigcup_{\mu \in S_a} Z(\mu)= \bigsqcup_{\nu \in T_a} Z(\nu)$.  Let $T = \bigcup_{a \in A} T_a$.  Then $\bigsqcup_{a \in A} V_a =  \bigsqcup_{\nu \in T} Z(\nu)$ where $T$ is a finite subset of $A^*$.  Observe that the $Z(\nu)$'s are disjoint since the $V_a$'s are disjoint and that for each $\nu \in T$ there exists a unique $a \in A$ such that $Z(\nu) \subseteq V_a$.  By Remark \ref{samelength} there exists $n \in \mathbb{N}$ and a finite set $R \subseteq A^n$ such that $\bigsqcup_{a \in A} V_a =  \bigsqcup_{\nu \in T} Z(\nu)= \bigsqcup_{\lambda \in R} Z(\lambda)$.  Note that since $| \nu | \leq n$ for every $\nu \in T$ and the $Z(\nu)'s$ are disjoint, then for every $\lambda \in R$ there exists a unique $\nu \in T$ such that $Z(\lambda) \subseteq Z(\nu)$.  Recall that $\bigsqcup_{a \in A} V_a$ is a cover of $A^{\mathbb{N}}$.  Therefore $\bigsqcup_{\lambda \in R} Z(\lambda)$ is also a cover of $A^{\mathbb{N}}$, and hence $R=A^n$.  Observe that for each $\lambda \in A^n$, there exists a unique $\nu \in T$ and a unique $a \in A$ such that $Z(\lambda) \subseteq Z(\nu) \subseteq V_a = \phi^{-1}(Z(a))$.
\end{proof}

\begin{prop} \label{prop-d-exists}
If $\phi: A^{\mathbb{N}} \to A^{\mathbb{N}}$ is a continuous function that commutes with the shift map $\sigma$, then $\phi$ is a sliding block code.
\end{prop}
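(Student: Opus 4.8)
The plan is to use the previous results (Lemma~\ref{phicont} and Lemma~\ref{taud}) to extract a block map $d$ from $\phi$ and then verify $\tau_d = \phi$ using the commutation relation $\phi\sigma = \sigma\phi$.

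By Lemma~\ref{phicont} there is an $n \in \mathbb{N}$ such that for each $\lambda \in A^n$ there is a unique $a \in A$ with $Z(\lambda) \subseteq \phi^{-1}(Z(a))$. This gives a well-defined block map $d: A^n \to A$ by setting $d(\lambda) := a$, i.e. $d(\lambda)$ is the unique letter such that $\phi(Z(\lambda)) \subseteq Z(d(\lambda))$. I claim $\tau_d = \phi$.

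To prove the claim, fix $x = x_1 x_2 x_3 \cdots \in A^{\mathbb{N}}$ and compute the $i$-th coordinate of $\phi(x)$. The key observation is that $\phi(x)_i$ is the unique letter $b$ with $\phi(x) \in Z(\underbrace{\ast\cdots\ast}_{i-1} b)$, and since $\sigma$ drops leading coordinates, $\phi(x)_i = (\sigma^{i-1}\phi(x))_1$. Using the commutation relation iterated, $\sigma^{i-1}\phi(x) = \phi(\sigma^{i-1}x) = \phi(x_i x_{i+1} x_{i+2} \cdots)$, so $\phi(x)_i = \phi(\sigma^{i-1}x)_1$, which is the unique letter $b$ such that $\phi(\sigma^{i-1}x) \in Z(b)$. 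Now $\sigma^{i-1}x \in Z(x_i \cdots x_{i+n-1})$, and by the defining property of $d$ we get $\phi(\sigma^{i-1}x) \in \phi(Z(x_i \cdots x_{i+n-1})) \subseteq Z(d(x_i \cdots x_{i+n-1}))$. Hence $\phi(x)_i = d(x_i \cdots x_{i+n-1}) = \tau_d(x)_i$. Since $i$ and $x$ were arbitrary, $\phi = \tau_d$, so $\phi$ is a sliding block code.

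The routine verifications are that $d$ is well-defined (immediate from the uniqueness in Lemma~\ref{phicont}) and that the commutation relation iterates to $\sigma^{k}\phi = \phi\sigma^{k}$ for all $k \geq 0$ (trivial induction). I do not expect a genuine obstacle here: the content has been front-loaded into Lemma~\ref{phicont}, and this proposition is essentially the bookkeeping that converts the set-theoretic statement there into a coordinate-wise formula. The only point requiring a little care is making sure the indices line up correctly — that applying $\sigma^{i-1}$ both shifts the input word to start at position $i$ and reads off the first coordinate of the output — but this is exactly the computation displayed in the proof of Lemma~\ref{taud}, just run in reverse.
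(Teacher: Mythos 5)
Your proposal is correct and follows essentially the same route as the paper: define $d(\lambda)$ as the unique letter with $Z(\lambda)\subseteq\phi^{-1}(Z(d(\lambda)))$ via Lemma~\ref{phicont}, then verify $\tau_d=\phi$ coordinatewise using the iterated commutation relation. If anything, your use of $\sigma^{i-1}$ to extract the $i$-th coordinate keeps the indices aligned more cleanly than the paper's displayed chain.
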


\begin{proof}
Since $\phi$ is continuous, by Lemma \ref{phicont} there exists $n \in \mathbb{N}$ such that for each $\lambda \in A^n$ there exists a unique $a \in A$ such that $Z(\lambda) \subseteq \phi^{-1}(Z(a))$.  Define $d:A^n \to A$ by $d(\lambda)=a$ where $a$ is the unique element in $A$ such that $Z(\lambda) \subseteq \phi^{-1}(Z(a))$.  The function $d$ is well defined since the element $a$ is unique. 
We will now show that $\tau_d = \phi$.  Let $k \in \mathbb{N}$ and $x \in A^{\mathbb{N}}$.  Notice that $\sigma^k(x) \in Z(x_{k+1} \cdots x_{k+n}) \subseteq \phi^{-1}(Z(a))$ for some $a \in A$. Then $\phi(\sigma^k(x))_1=a$.  Also, we have $\tau_d(\sigma^k(x))_1=d(\sigma^k(x)_1 \cdots \sigma^k(x)_n)= d(x_{k+1} \cdots x_{k+n})= a$.  Therefore
\[ \phi(x)_k= \sigma^k(\phi(x))_1= \phi(\sigma^k(x))_1=a =\tau_d(\sigma^k(x))_1 =\sigma^k(\tau_d(x))_1=\tau_d(x)_k.\]
\end{proof}

\begin{exam} \label{constant}
Fix $a \in A$ and define $aaa \cdots= \overline{a} \in A^{\mathbb{N}}$.  Consider a constant function $\phi: A^{\mathbb{N}} \to A^{\mathbb{N}}$ defined by $\phi (y)=\overline{a}$ for all $y \in A^{\mathbb{N}}$.  Then $\phi$ is a continuous function that commutes with $\sigma$.  Notice that for $b \in A$, $\phi^{-1}(Z(b))= \emptyset$ unless $b= a$, thus we have $\phi^{-1}(Z(a))=A$ and $n=1$.  So by Proposition~\ref{prop-d-exists} $d: A \to A$ is defined by $d(b)=a$ for all $b \in A$ and $\tau_d = \phi$.
\end{exam}

\begin{exam}
Let $\phi: A^{\mathbb{N}} \to A^{\mathbb{N}}$ be defined by $\phi(x_1 x_2 x_3 \cdots)=x_2 x_3 \cdots$.  Observe that for $b \in A$, $\phi^{-1}(Z(b))= \bigsqcup_{a \in A} Z(ab)$.  So for all $x_1 x_2 \in A^2$ we have $Z(x_1 x_2) \subset \phi^{-1}(Z(x_2))$.  So by Proposition~\ref{prop-d-exists} $d: A^2 \to A$ is defined by $d(x_1 x_2)=x_2$.
\end{exam}

\noindent The following proposition shows us the extent to which the function $d$ of Proposition~\ref{prop-d-exists} is unique.

\begin{prop} \label{prop-d-uni}
Let $\phi: A^{\mathbb{N}} \to A^{\mathbb{N}}$ be a continuous function that commutes with the shift map $\sigma$.  Let $n$ be the smallest natural number with the property that there exists a block map $d:A^n \to A$ such that $\tau_d=\phi$.  (By Proposition~\ref{prop-d-exists} such an $n$ exists.)  If $m \in \mathbb{N}$ and there exists a function $d': A^m \to A$ such that $\tau_{d'}= \phi$, then $m \geq n$ and $d'(x_i \cdots x_{m+i-1})=d(x_i \cdots x_{n+i-1})$ for all $i \in \mathbb{N}$.  In particular, if $m=n$, then $d'=d$.
\end{prop}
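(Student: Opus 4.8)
The plan is to prove the two assertions---that $m \geq n$ and that $d'$ and $d$ agree in the sense stated---by exploiting the minimality of $n$ and the fact that both $\tau_d$ and $\tau_{d'}$ equal $\phi$, hence act identically on every sequence.

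First I would establish the identity $d'(x_i \cdots x_{m+i-1}) = d(x_i \cdots x_{n+i-1})$ for all $i \in \mathbb{N}$ and all $x \in A^{\mathbb{N}}$. Fix $i$ and an arbitrary word $w \in A^{\max(m,n)+i-1}$; extend $w$ arbitrarily to a sequence $x \in A^{\mathbb{N}}$ with those initial coordinates. Since $\tau_{d'}(x) = \phi(x) = \tau_d(x)$, comparing the $i$-th coordinates of both sides gives $d'(x_i \cdots x_{m+i-1}) = \tau_{d'}(x)_i = \tau_d(x)_i = d(x_i \cdots x_{n+i-1})$. Because the coordinates $x_i, x_{i+1}, \dots$ past position $i-1$ can be chosen freely and the left side depends only on $x_i \cdots x_{m+i-1}$ while the right side depends only on $x_i \cdots x_{n+i-1}$, this equality holds for every choice of the relevant letters, which is exactly the claimed relation; and by shift-invariance (or simply by varying $i$) it holds for all $i$.

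Next I would deduce $m \geq n$. Suppose toward a contradiction that $m < n$. Then in the identity above with $i = 1$, the right-hand side $d(x_1 \cdots x_n)$ depends only on $x_1 \cdots x_m$ (since it equals $d'(x_1 \cdots x_m)$, which does not involve $x_{m+1}, \dots, x_n$). Thus I can define a block map $\tilde d : A^m \to A$ by $\tilde d(x_1 \cdots x_m) := d(x_1 \cdots x_n)$ for any extension, which is well defined by the previous paragraph; but then $\tau_{\tilde d} = \tau_{d'} = \phi$ with $m < n$, contradicting the minimality of $n$. Hence $m \geq n$. (Alternatively: once $m \geq n$ is known, the relation $d'(x_i \cdots x_{m+i-1}) = d(x_i \cdots x_{n+i-1})$ says $d'$ ignores its last $m-n$ letters and restricts to $d$ on the first $n$; and if $m = n$ it reads $d' = d$ directly.)

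The main obstacle is a bookkeeping one rather than a conceptual one: making precise the ``depends only on'' arguments, i.e.\ verifying that the value $\tau_d(x)_i$ genuinely depends only on the window $x_i \cdots x_{n+i-1}$ and similarly for $\tau_{d'}$, so that the equality of two functions on all of $A^{\mathbb{N}}$ forces the equality of the block maps on all words of the appropriate length. This is immediate from Definition~\ref{defitaud}, but one must be careful to quantify correctly: the cleanest route is to fix the finitely many coordinates that matter, observe that any two sequences agreeing on coordinates $i$ through $\max(m,n)+i-1$ give the same values on both sides, and conclude. Everything else is a short deduction from the minimality hypothesis on $n$.
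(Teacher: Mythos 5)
Your proof is correct and follows essentially the same route as the paper: both derive the identity $d'(x_i \cdots x_{m+i-1}) = d(x_i \cdots x_{n+i-1})$ by comparing the $i$-th coordinates of $\tau_d(x) = \phi(x) = \tau_{d'}(x)$. The only (harmless) detour is your contradiction argument for $m \geq n$: the block map $\tilde d$ you construct is just $d'$ itself, so the existence of $d' : A^m \to A$ with $\tau_{d'} = \phi$ already contradicts the minimality of $n$ when $m < n$, which is exactly the one-line appeal to minimality the paper makes.
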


\begin{proof}
Since $\tau_d=\phi=\tau_{d'}$, we have $m \geq n$ by the minimality of $n$.  Also, for all $i \in \mathbb{N}$ and $x \in A^{\mathbb{N}}$ we have $d(x_i \cdots x_{n+i-1})=\tau_d(x)_i= \phi(x)_i= \tau_{d'}(x)_i= d'(x_i \cdots x_{m+i-1})$.
\end{proof}

\begin{thm} \label{classify}
The function $\phi: A^{\mathbb{N}} \to A^{\mathbb{N}}$ is continuous and commutes with the shift map $\sigma$ if and only if $\phi$ is a sliding block code.
\end{thm}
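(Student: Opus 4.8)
The plan is to prove Theorem~\ref{classify} by simply invoking the two directions already established in the excerpt. The statement is a biconditional: $\phi$ is continuous and commutes with $\sigma$ if and only if $\phi$ is a sliding block code. One direction is Lemma~\ref{taud}, which says every sliding block code is continuous and commutes with $\sigma$. The converse direction is Proposition~\ref{prop-d-exists}, which says every continuous $\phi$ that commutes with $\sigma$ is a sliding block code. So the proof is a two-line citation of these results, with no new content.

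Concretely, I would first assume $\phi$ is a sliding block code and apply Lemma~\ref{taud} to conclude $\phi$ is continuous and commutes with $\sigma$; this handles the ``only if'' half of the stated equivalence (reading left-to-right as written). Then I would assume $\phi$ is continuous and commutes with $\sigma$ and apply Proposition~\ref{prop-d-exists} to produce a block map $d$ with $\tau_d = \phi$, so $\phi$ is a sliding block code; this handles the ``if'' half. Combining the two gives the equivalence.

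There is no real obstacle here: all the work was done in Lemmas~\ref{phicont}, the compactness-and-refinement bookkeeping of Remark~\ref{samelength}, and Proposition~\ref{prop-d-exists}, and Theorem~\ref{classify} is merely the packaging of those facts into a single clean statement for citation later in the paper. The only thing to be careful about is matching the direction of the ``if and only if'' correctly to the right previously-proved result, but that is purely cosmetic.

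\begin{proof}
If $\phi$ is a sliding block code, then by Lemma~\ref{taud} it is continuous and commutes with the shift map $\sigma$. Conversely, if $\phi$ is continuous and commutes with $\sigma$, then by Proposition~\ref{prop-d-exists} it is a sliding block code.
\end{proof}
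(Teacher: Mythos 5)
Your proof is correct and is essentially identical to the paper's: both directions are handled by citing Lemma~\ref{taud} and Proposition~\ref{prop-d-exists}, exactly as the paper does.
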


\begin{proof}
The sufficiency is proven in Proposition~\ref{prop-d-exists} and the necessity is proven in Lemma~\ref{taud}.
\end{proof}

The following example illustrates the importance of the function $\phi$ being continuous.

\begin{exam}
Let $A=\{0,1\}$ and $\phi: A^{\mathbb{N}} \to A^{\mathbb{N}}$ be defined as follows: $\phi(0^{\infty})=1^{\infty}$, $\phi(1^{\infty})=0^{\infty}$, and $\phi$ is the identity on all other points.  It is clear that $\phi$ commutes with $\sigma$, however it is impossible to find a function $d$ such that $\tau_d=\phi$.   If $d:A^n \to A$ could be defined for some $n \in \mathbb{N}$, then for the points where $\phi$ acts as the identity we must have that $d(0^n)=0$ and $d(1^n)=1$.  However, defining $d$ in this manner would not work for the points $0^{\infty}$ and $1^{\infty}$.  
\end{exam}

\section{Local homeomorphisms that commute with the shift} \label{LH}

In this section we examine properties on the block map that force the induced sliding block code to be a local homomorphism.  Exel and Renault proved that if the block map is progressive, then the induced sliding block code is a local homeomorphism \cite[Theorem 14.3]{ER07}.  The converse, however, remained an open problem.  In this section we prove the converse is false by providing a counterexample in Example~\ref{counterex}.  Specifically, we describe a sliding block code that is a local homeomorphism such that there does not exist a progressive block map that defines it.   We then generalize the idea of a progressive block map by defining a weakly progressive block map (Definition~\ref{weakprog}).  In Theorem~\ref{wplocalhomeo} we prove that if the block map is a weakly progressive function, then the induced sliding block code is a local homeomorphism.  This gives weaker hypothesis under which we can conclude that a sliding block code is a local homeomorphism.

\begin{defi}
Let $X,Y$ be topological spaces.  A continuous function $f: X \to Y$ is a \emph{local homeomorphism} if for every point $x \in X$ there exists an open neighborhood $U$ of $x$ such that $f(U)$ is open in $Y$ and $f: U \to f(U)$ is a homeomorphism.
\end{defi}

\begin{defi} \label{prog}
A block map $d: A^n \to A$ is \emph{progressive} if for each fixed $x_1 \cdots x_{n-1} \in A^{n-1}$, the function $p_d^{x_1 \cdots x_{n-1}}: A \to A$ defined by $p_d^{x_1 \cdots x_{n-1}}(a)=d(x_1 \cdots x_{n-1}a)$ is bijective .
\end{defi}

\begin{exam} \label{sigma2}
Define $d: A^2 \to A$. by $d(a_1 a_2)= a_2$.  Then $\tau_d=\sigma$.  Fix $a \in A$, let $a_1, a_2 \in A$ and suppose $p_d^a(a_1)= p_d^a(a_2)$.  We have
\[
a_1=d(a a_1)=p_d^a(a_1)=p_d^a(a_2)=d(a a_2)=a_2,
\]
so $p_d^a$ is injective.  Note that $b=p_d^a(b)$ for all $b \in A$ so $p_d^a$ is surjective.  Hence $d$ is progressive.
\end{exam}

\begin{rem}
Recall from Remark~\ref{nonuni} that for $d':A^3 \to A$ defined by $d(a_1 a_2 a_3)=a_2$ we have $\tau_{d'}= \sigma= \tau_d$ for the block map $d$ from Example~\ref{sigma2}.  Notice that $d'$ is not progressive, therefore it is important that we consider the smallest natural number $n$ such that the function $d:A^n \to A$ defines $\tau_d$.

Given an arbitrary sliding block code $\phi$ we wish to determine if there is a progressive block map that defines it.  Let $n$ be the smallest natural number such that a block map $d:A^n \to A$ defines $\phi$.  Then for any $m > n$ and block map $d':A^m \to A$ that defines $\phi$ the function $d'$ is not progressive.  We observe this by recalling from Proposition~\ref{prop-d-uni} that $d'(x_i \cdots x_{m+i-1})= d(x_i \cdots x_{n+i-1})$.  Therefore $d'$ can not be bijective.

In this section, when we consider a block map $d:A^n \to A$ that defines $\tau_d$ we assume that $n$ is the smallest natural number such that there exists a function $d:A^n \to A$ that defines $\tau_d$.  Proposition~\ref{prop-d-uni} allows us to do this.
\end{rem}

\begin{exam}
The constant function $d(b)=a$ for all $b \in A$ from Example~\ref{constant} is not progressive.
\end{exam}

The following is an example of a sliding block code that is a local homeomorphism and can not be defined from a progressive block map.  

\begin{exam} \label{counterex}
Let $A = \{0,1,2,3 \}$ and define $d:A^2 \to A$ by
\begin{align*}
&d(00) = 0  &d(01) &= 0  &d(02) &= 1  &d(03) &= 1\\
&d(10) = 3  &d(11) &= 3  &d(12) &= 2  &d(13) &= 2\\
&d(20) = 2  &d(21) &= 2  &d(22) &= 3  &d(23) &= 3\\
&d(30) = 1  &d(31) &= 1  &d(32) &= 0  &d(33) &= 0.
\end{align*}  
Observe that it is not possible to define a block map $d': A \to A$ such that $\tau_d = \tau_{d'}$.  Therefore if $\tau_d$ may be defined from a progressive block map $d$ is the only possibility.  Since $d(00) = 0= d(01)$ $d$ is not progressive.  With a little work one can check that $\tau_d$ is a homeomorphism on $Z(0) \cup Z(1)$ and $Z(2) \cup Z(3)$ such that $\tau_d(Z(0) \cup Z(1))=A^{\mathbb{N}}= \tau_d(Z(2) \cup Z(3))$.  Therefore $\tau_d$ is a local homeomorphism.
\end{exam}

Now we generalize the idea of a progressive block map by defining a weakly progressive block map.  We prove that if $d$ is weakly progressive, then $\tau_d$ is a local homeomorphism.  

\begin{defi} \label{weakprog}
Fix $n,m \in \mathbb{N}$ and let a block map $d: A^n \to A$ have the property that for every $\mu \in A^n$ and every $\nu \in A^m$ such that $d(\mu)= \nu_1$ there exists a unique $a \in A$ such that $p_{d,m}^{\mu_1 \cdots \mu_{n-1}}(a \alpha)=d(\mu_1 \cdots \mu_{n-1} a) d(\mu_2 \cdots \mu_{n-1} a \alpha_1) \cdots= \nu$ has a solution $\alpha \in A^{m-1}$.  Then we say that $d$ is \emph{weakly progressive} of order $m$.
\end{defi}

\noindent Observe that $d$ is progressive if and only if $d$ is weakly progressive of order $1$.

\begin{exam} \label{wpnotr}
Let $A = \{0,1,2,3 \}$ and define $d:A^2 \to A$ by:
\begin{align*}
&d(00) = 0  &d(01) &= 0 &d(02) &= 1 &d(03) &= 1\\
&d(10) = 2  &d(11) &= 2 &d(12) &= 3 &d(13) &= 3\\
&d(20) = 0  &d(21) &= 0 &d(22) &= 1 &d(23) &= 1\\
&d(30) = 2  &d(31) &= 2 &d(32) &= 3 &d(33) &= 3.
\end{align*}
The function $d$ is weakly progressive of order $2$.
\end{exam}

\begin{rem}
The block map from Example~\ref{counterex} is weakly progressive of order $2$.
\end{rem}

\begin{prop} \label{winimpbij}
Let $d: A^n \to A$ be a block map and fix $x_1 \cdots x_{n-1} \in A^{n-1}$.  If $d$ is weakly progressive, then \[\tau_d: Z(x_1 \cdots x_{n-1}) \to \bigcup_{a \in A} Z(d(x_1 \cdots x_{n-1} a))\] is bijective.
\end{prop}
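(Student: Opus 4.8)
The plan is to fix $m\in\mathbb{N}$ so that $d$ is weakly progressive of order $m$, abbreviate $w:=x_1\cdots x_{n-1}$, and for $x\in Z(w)$ and $k\ge 0$ introduce the \emph{window} notation $W_k(x):=x_{k+1}x_{k+2}\cdots x_{n-1+k}\in A^{n-1}$, so that $W_0(x)=w$. The one routine preliminary is to record, straight from the definitions of $\tau_d$ and of $p_{d,m}$, the identity
\[
\tau_d(x)_{k+1}\,\tau_d(x)_{k+2}\cdots\tau_d(x)_{k+m}\;=\;p_{d,m}^{W_k(x)}\bigl(x_{n+k}\,x_{n+k+1}\cdots x_{n+k+m-1}\bigr),\qquad k\ge 0,
\]
whose $j$-th coordinate on either side equals $d(x_{k+j}\cdots x_{n+k+j-1})$. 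Taking $k=0$ and reading off the first coordinate gives $\tau_d(x)_1=d(wx_n)$, so $\tau_d$ indeed maps $Z(w)$ into $\bigcup_{a\in A}Z(d(wa))$. (When $m=1$ the map $d$ is progressive, each $p_d^{W}$ is a bijection of $A$, and the argument below simplifies, so one may assume $m\ge 2$.)

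For injectivity, suppose $x,x'\in Z(w)$ with $\tau_d(x)=\tau_d(x')=:z$, and prove $x_i=x'_i$ by induction on $i$; the range $i\le n-1$ is immediate since $x,x'\in Z(w)$. Assuming $x_i=x'_i$ for all $i\le n-1+k$, we get $W_k(x)=W_k(x')=:W$, and weak progressivity of order $m$ applied to $\mu:=Wx_{n+k}\in A^n$ and $\nu:=z_{k+1}\cdots z_{k+m}\in A^m$ — whose hypothesis $d(\mu)=z_{k+1}=\nu_1$ holds by the displayed identity — produces a \emph{unique} $a\in A$ for which $p_{d,m}^{W}(a\alpha)=\nu$ admits a solution $\alpha\in A^{m-1}$. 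Applying the displayed identity once to $x$ and once to $x'$ (using $W_k(x)=W_k(x')=W$ and $\tau_d(x)=\tau_d(x')=z$) exhibits both $x_{n+k}$ and $x'_{n+k}$ as such a value of $a$, whence $x_{n+k}=a=x'_{n+k}$ and the induction continues.

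For surjectivity, fix $z\in\bigcup_{a\in A}Z(d(wa))$, so $z_1=d(wa_0)$ for some $a_0$, i.e.\ $z_1\in\range p_d^{w}$. I would build $x_n,x_{n+1},\dots$ recursively, carrying at stage $k\ge 0$ (after $x_1,\dots,x_{n-1+k}$ are defined, with $W_k:=x_{k+1}\cdots x_{n-1+k}$) the invariant that $z_{k+1}\in\range p_d^{W_k}$ and $\tau_d(x)_j=z_j$ for $j\le k$; stage $k=0$ is the previous sentence. Given stage $k$, choose $\mu\in A^n$ with $\mu_1\cdots\mu_{n-1}=W_k$ and $d(\mu)=z_{k+1}$, apply weak progressivity of order $m$ to $\mu$ and $\nu:=z_{k+1}\cdots z_{k+m}$ to obtain the unique $a$ with $p_{d,m}^{W_k}(a\alpha)=\nu$ solvable, fix a solution $\alpha=\alpha_1\cdots\alpha_{m-1}$, and set $x_{n+k}:=a$. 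Reading off the first coordinate of this equation gives $d(x_{k+1}\cdots x_{n+k})=z_{k+1}$, i.e.\ $\tau_d(x)_{k+1}=z_{k+1}$; reading off the second coordinate gives $d\bigl((W_k)_2\cdots(W_k)_{n-1}\,a\,\alpha_1\bigr)=z_{k+2}$, and since $(W_k)_2\cdots(W_k)_{n-1}\,a=x_{k+2}\cdots x_{n+k}=W_{k+1}$, this says precisely $z_{k+2}\in\range p_d^{W_{k+1}}$. Hence the invariant propagates to stage $k+1$, the recursion never stops, and the resulting $x\in Z(w)$ satisfies $\tau_d(x)_j=z_j$ for every $j$, i.e.\ $\tau_d(x)=z$.

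The genuinely delicate point is this surjectivity recursion: one must carry along the auxiliary invariant $z_{k+1}\in\range p_d^{W_k}$ — equivalently, the availability at every stage of the hypothesis $d(\mu)=\nu_1$ required by Definition~\ref{weakprog} — and verify that it propagates, which is exactly what the second-coordinate computation above accomplishes. Beyond that, the only thing to watch is the index bookkeeping relating the window $W_k$, the newly chosen letter $x_{n+k}$, and the target letter $z_{k+1}$; once that is set up, the uniqueness clause in the definition of weakly progressive is precisely what forces injectivity and the existence clause is precisely what drives the construction, establishing Proposition~\ref{winimpbij}.
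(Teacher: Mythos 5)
Your proof is correct and follows essentially the same route as the paper's: it applies weak progressivity iteratively to construct the preimage one letter at a time, with the solution $\alpha$ from each stage (read off in the second coordinate) supplying the hypothesis $d(\mu)=\nu_1$ needed at the next stage, and the uniqueness clause yielding injectivity. Your write-up merely makes explicit two points the paper leaves terse — the propagating invariant $z_{k+1}\in\range p_d^{W_k}$ and the induction behind ``each $a_i$ is unique, hence injective'' — so no further changes are needed.
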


\begin{proof}
Fix $m$ such that $d$ is weakly progressive of order $m$.  Notice that for $x \in Z(x_1 \cdots x_{n-1})$, $\tau_d(x) \in Z(d(x_1 \cdots x_n)) \subseteq \bigcup_{a \in A} Z(d(x_1 \cdots x_{n-1} a))$.  Therefore we have $\tau_d({Z(x_1 \cdots x_{n-1})}) \subseteq\bigcup_{a \in A} Z(d(x_1 \cdots x_{n-1} a))$.  Let $y \in Z(d(x_1 \cdots x_{n-1} a))$ for some $a \in A$.  We want to show that there exists a unique $x \in Z(x_1 \cdots x_{n-1})$ such that $\tau_d(x)= y$.  Notice that $x_1 \cdots x_{n-1} a \in A^n$ and $y_1 \cdots y_m \in A^m$ satisfy $d(x_1 \cdots x_{n-1}a)= y_1$.  So since $d$ is weakly progressive there exists a unique $a_1 \in A$ such that
\[
p_{d,m}^{x_1 \cdots x_{n-1}}(a_1  \alpha)= d(x_1 \cdots x_{n-1} a_1) d(x_2 \cdots x_{n-1} a_1 \alpha_1) \cdots= y_1 \cdots y_m
\]
for some $\alpha \in A^{m-1}$.  Now consider $x_2 \cdots x_{n-1} a_1 \alpha_1 \in A^n$ and $y_2 \cdots y_{m+1} \in A^m$ such that $d(x_2 \cdots x_{n-1} a_1 \alpha_1)= y_2$.  Since $d$ is weakly progressive there exists a unique $a_2 \in A$ such that
\[
p_{d,m}^{x_1 \cdots x_{n-1}}(a_2 \beta) = d(x_2 \cdots x_{n-1} a_1 a_2) d(x_3 \cdots x_{n-1} a_1 a_2 \beta_1) = y_2 \cdots y_{m+1}
\]
for some $\beta \in A^{m-1}$.  We may continue in this manner to construct $x= x_1 \cdots x_{n-1} a_1 a_2 \cdots$ such that $\tau_d(x)=y$, hence the function is surjective.  Since each $a_i$ was unique we have $\tau_d|{(Z(x_1 \cdots x_{n-1}))}$ injective.
\end{proof}

\noindent Observe that if $d$ is progressive, then $\bigcup_{a \in A} Z(d(x_1 \cdots x_{n-1} a))= A^{\mathbb{N}}$.  Thus $\tau_d$ is $|A^{n-1}|$ to $1$.

\begin{thm} \label{wplocalhomeo}
If $d: A^n \to A$ is weakly progressive block map then the induced sliding block code $\tau_d: A^{\mathbb{N}} \to A^{\mathbb{N}}$ is a local homeomorphism.
\end{thm}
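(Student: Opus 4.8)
The plan is to use Proposition~\ref{winimpbij} to produce, around each point, a clopen neighborhood on which $\tau_d$ restricts to a continuous bijection onto an open set, and then to upgrade that bijection to a homeomorphism by a standard compactness argument.

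First I would fix an arbitrary $x = x_1 x_2 x_3 \cdots \in A^{\mathbb{N}}$ and set $U := Z(x_1 \cdots x_{n-1})$, interpreting this as all of $A^{\mathbb{N}}$ in the degenerate case $n = 1$. Since cylinder sets are clopen and $x_1 \cdots x_{n-1}$ is a prefix of $x$, $U$ is an open neighborhood of $x$. Because $d$ is weakly progressive, Proposition~\ref{winimpbij} tells us that the restriction
\[
\tau_d : U \longrightarrow \bigcup_{a \in A} Z\bigl(d(x_1 \cdots x_{n-1} a)\bigr)
\]
is a bijection. The set on the right is a finite union of cylinder sets, hence open in $A^{\mathbb{N}}$, so $\tau_d(U)$ is open; this supplies the openness requirement in the definition of a local homeomorphism.

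Next I would verify that $\tau_d : U \to \tau_d(U)$ is a homeomorphism. It is a continuous bijection, the continuity being inherited from $\tau_d$ itself via Lemma~\ref{taud}, so it suffices to show the inverse is continuous. Here I would invoke the standard fact that a continuous bijection from a compact space to a Hausdorff space is a homeomorphism: $U$ is a closed subset of the compact space $A^{\mathbb{N}}$ and is therefore compact, while $\tau_d(U) \subseteq A^{\mathbb{N}}$ is Hausdorff. Since $x$ was arbitrary, $\tau_d$ is a local homeomorphism.

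There is essentially no serious obstacle once Proposition~\ref{winimpbij} is available; the only points requiring a little care are the degenerate case $n = 1$, where $U = A^{\mathbb{N}}$ and $\tau_d$ is in fact globally a homeomorphism, and the observation that the neighborhood $U$ we choose is clopen and hence compact — this is precisely what makes the compact-to-Hausdorff argument applicable and spares us from having to exhibit the inverse sliding block code explicitly.
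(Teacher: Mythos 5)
Your proposal is correct and follows essentially the same route as the paper's proof: invoke Proposition~\ref{winimpbij} to get a continuous bijection from the clopen (hence compact) cylinder $Z(x_1 \cdots x_{n-1})$ onto the open set $\bigcup_{a \in A} Z(d(x_1 \cdots x_{n-1} a))$, then apply the compact-to-Hausdorff criterion to conclude it is a homeomorphism. The only addition is your explicit treatment of the degenerate case $n=1$, which the paper leaves implicit.
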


\begin{proof}
By Proposition~\ref{winimpbij} $\tau_d(Z(x_1 \cdots x_{n-1})) = \bigcup_{a \in A} Z(d(x_1 \cdots x_{n-1} a))$ so $\tau_d(Z(x_1 \cdots x_{n-1}))$ is open.  Since $\tau_d$ is continuous, $\tau_d|_{Z(x_1 \cdots x_{n-1})}$ is also continuous.  The set $Z(x_1 \cdots x_{n-1})$ is compact since it is a cylinder set.  Recall $A^{\mathbb{N}}$ is Hausdorff, hence $\bigcup_{a \in A} Z(d(x_1 \cdots x_{n-1} a)) \subseteq A^{\mathbb{N}}$ is Hausdorff.  So we have $\tau_d|_{Z(x_1 \cdots x_{n-1})}$ is a continuous bijective function from the compact space $Z(x_1 \cdots x_{n-1})$ to the Hausdorff space $\bigcup_{a \in A} Z(d(x_1 \cdots x_{n-1} a))$.  Therefore by \cite[Theorem 5.8]{jKel55}, $\tau_d|_{Z(x_1 \cdots x_{n-1})}$ is a homeomorphism.  Hence $\tau_d$ is a local homeomorphism.
\end{proof}

\section{Continuous functions that $*$-commute with the Shift} \label{SC}

The concept of $*$-commuting for functions was introduced in \cite{AR} and further examined in \cite{ER07}.  In this section we introduce the concept of a regressive block map (Definition~\ref {defi-reg}) and prove that sliding block codes that $*$-commute with the shift map $\sigma$ are exactly those defined from regressive block maps.

\begin{defi} \label{Exel}
Let $X$ be a set.  Two functions $S,T:X \to X$ \emph{$*$-commute} if they commute and given $(y,z) \in X \times X$ such that $S(y)=T(z)$ there exists a unique $x \in X$ such that $T(x)=y$ and $S(x)=z$. 

\[\begin{tikzpicture}
    \node[inner sep=0pt, circle, fill=black] (a) at (0, 2) [draw] {.}; 
    \node[inner sep=3pt, anchor = south] at (a.north) {$x$};
    \node[inner sep=0pt, circle, fill=black] (b) at (-2, 0) [draw] {.}; 
    \node[inner sep=3pt, anchor = south] at (b.north) {$y$};
    \node[inner sep=0pt, circle, fill=black] (c) at (0, -2) [draw] {.}; 
    \node[inner sep=3pt, anchor = north] at (c.south) {$S(y)=T(z)$};
    \node[inner sep=0pt, circle, fill=black] (d) at (2, 0) [draw] {.}; 
    \node[inner sep=3pt, anchor = south] at (d.north) {$z$};
     \draw[style=semithick, style=dashed, -latex] (a.south west)--(b.north east) node[pos=0.5, anchor=south, inner sep=4pt]{$T$};
      \draw[style=semithick, -latex] (b.south east)--(c.north west) node[pos=0.5, anchor=south, inner sep=3pt]{$S$};
      \draw[style=semithick, style=dashed, -latex] (a.south east)--(d.north west) node[pos=0.5, anchor=south, inner sep=4pt]{$S$};
      \draw[style=semithick, -latex] (d.south west)--(c.north east) node[pos=0.5, anchor=south, inner sep=4pt]{$T$};
\end{tikzpicture}\]
\end{defi}

\begin{rem}
 Definition~\ref{Exel} is the same as the definition of ``star-commuting" in \cite[Definition 10.1]{ER07}. This is equivalent to the definition of ``$*$-commuting" given in \cite[Definition 5.6]{AR}.
\end{rem}

\begin{exam} \label{bar}
Let $A=\{0,1\}$ and define the function $d:A \to A$ by $d(0)=1$ and $d(1)=0$.  For $x \in A^{\mathbb{N}}$ denote $\tau_d(x)=\overline{x}$.  By Lemma \ref{taud} we know that $\tau_d$ commutes with $\sigma$.  Let $y,z \in A^{\mathbb{N}}$ be such that $\sigma(y)=\tau_d(z)$.  Since $\tau_d$ is bijective, observe that $\overline{y}$ is the unique element in $A^{\mathbb{N}}$ such that $\tau_d(\overline{y})=y$.  We also have that $\tau_d(\sigma(\overline{y}))=\sigma(\tau_d(\overline{y}))= \sigma(y) = \tau_d(z)$ and since $\tau_d$ is bijective $\sigma(\overline{y})=z$.  So $\tau_d$ $*$-commutes with $\sigma$.
\end{exam}

\begin{exam} \label{sigma3}
Recall Example \ref{sigma2} where we define $d: A^2 \to A$ by $d(a_1 a_2)= a_2$ so that $\tau_d= \sigma$.  Let $a_1, a_2  \in A$ such that $a_1 \neq a_2$ and $w \in A^{\mathbb{N}}$.  Observe that $\sigma(a_1 w)=w=\sigma(a_2 w)$ and $a_1 w \neq a_2 w$.  Therefore $\sigma$ does not $*$-commute with itself.
\end{exam}

In the two previous examples proving whether or not the function $\tau_d$ $*$-commutes with $\sigma$ using the definition was not terribly difficult.  However consider the following example: Let $A=\{0,1,\cdots n-1\}$ and define $d:A^n \to A$ by $d(a_1 \cdots a_n)= (a_1+ \cdots + a_n) \pmod{n}$.  Determining whether or not the associated $\tau_d$ $*$-commutes with $\sigma$ is extremely unpleasant.  We would like to determine easily verifiable conditions on the block map $d$ that determine when $\tau_d$ $*$-commutes with the shift.

\begin{defi} \label{defi-reg}
The block map $d: A^n \to A$ is \emph{regressive} if for each fixed $x_1 \cdots x_{n-1} \in A^{n-1}$ the function $r_d^{x_1 \cdots x_{n-1}}: A \to A$ defined by $r_d^{x_1 \cdots x_{n-1}}(a)=d(ax_1 \cdots x_{n-1})$ is bijective.
\end{defi}

\begin{exam} \label{counterex2}
Recall the block map from Example~\ref{counterex}.  Notice that when the second coordinate is fixed $d$ is bijective.  Therefore $d$ is regressive.
\end{exam}

\begin{exam} 
Recall the block map from Example~\ref{wpnotr}.  Since $d(00) = 0= d(20)$, $d$ is not regressive.
\end{exam}

\begin{exam} \label{modn}
For this example, all addition is modulo $n$.  Let $A=\{0,1,\cdots, n-1\}$ and define $d:A^n \to A$ by $d(a_1 \cdots a_n)= (a_1+ \cdots + a_n) \pmod{n}$.  Fix $x_1 \cdots x_{n-1} \in A^{n-1}$ and let $x:= x_1+ \cdots + x_{n-1}$.  To see that $r_d$ is injective, let $a_1, a_2 \in A$ and suppose $r_d^{x_1 \cdots x_{n-1}}(a_1)=r_d^{x_1 \cdots x_{n-1}}(a_2)$.  Then
 \begin{align*}
a_1+x &=d(a_1 x_1 \cdots x_{n-1})= r_d^{x_1 \cdots x_{n-1}}(a_1) \\
           &=r_d^{x_1 \cdots x_{n-1}}(a_2)= d(a_2 x_1 \cdots x_{n-1})=a_2+x,
 \end{align*}
therefore $a_1=a_2$.  Let $a \in A$.  Then we have $r_d^{0_1 \cdots 0_{n-1}}(a)=a$.  Therefore $d$ is regressive.
\end{exam}

\begin{thm} \label{refiffstar}
The block map $d: A^n \to A$ is regressive if and only if the induced sliding block code $\tau_d: A^{\mathbb{N}} \to A^{\mathbb{N}}$ $*$-commutes with the shift map $\sigma$.
\end{thm}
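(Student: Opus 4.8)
The plan is to prove both implications directly from the definitions, using the characterization that $\tau_d$ commutes with $\sigma$ (Lemma~\ref{taud}) so that the only issue in $*$-commuting is the existence and uniqueness of the point $x$ with $\tau_d(x) = y$ and $\sigma(x) = z$ when $\sigma(y) = \tau_d(z)$.

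\textbf{Sufficiency ($d$ regressive $\Rightarrow$ $\tau_d$ $*$-commutes with $\sigma$).} First I would unwind what the equation $\sigma(y) = \tau_d(z)$ says coordinatewise: $y_{i+1} = d(z_i \cdots z_{i+n-1})$ for all $i \in \mathbb{N}$. The point $x$ we seek must satisfy $\sigma(x) = z$, which forces $x_{i+1} = z_i$ for all $i$, so $x = x_1 z_1 z_2 \cdots$ for a single undetermined first coordinate $x_1$. The remaining requirement $\tau_d(x) = y$ reads $d(x_i \cdots x_{i+n-1}) = y_i$ for all $i$. For $i \geq 2$ this becomes $d(z_{i-1} \cdots z_{i+n-2}) = y_i$, which holds automatically by the relation $\sigma(y) = \tau_d(z)$. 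The only genuine constraint is $i = 1$: $d(x_1 z_1 \cdots z_{n-1}) = y_1$, i.e. $r_d^{z_1 \cdots z_{n-1}}(x_1) = y_1$. Since $d$ is regressive, $r_d^{z_1 \cdots z_{n-1}}$ is a bijection of $A$, so there is exactly one such $x_1$. This gives existence and uniqueness of $x$, and together with commutativity (Lemma~\ref{taud}) this is exactly $*$-commuting.

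\textbf{Necessity ($\tau_d$ $*$-commutes with $\sigma$ $\Rightarrow$ $d$ regressive).} I would prove the contrapositive, or argue directly: fix $x_1 \cdots x_{n-1} \in A^{n-1}$ and show $r_d^{x_1 \cdots x_{n-1}}$ is a bijection. Since $A$ is finite it suffices to show it is injective (equivalently surjective). Suppose $r_d^{x_1 \cdots x_{n-1}}(a) = r_d^{x_1 \cdots x_{n-1}}(b)$ with $a \neq b$; set $z^{(a)} := a x_1 \cdots x_{n-1} w$ and $z^{(b)} := b x_1 \cdots x_{n-1} w$ for any fixed tail $w \in A^{\mathbb{N}}$, and let $y := c \, \tau_d(\sigma(z^{(a)}))$ where $c := d(a x_1 \cdots x_{n-1}) = d(b x_1 \cdots x_{n-1})$; note $\tau_d(\sigma(z^{(a)})) = \tau_d(\sigma(z^{(b)}))$ since the two sequences agree from the second coordinate on. Then $\sigma(y) = \tau_d(\sigma(z^{(a)})) = \sigma(\tau_d(z^{(a)})) = \tau_d(z^{(a)})$ — here I would double-check the coordinate bookkeeping, that $y_1 = c = \tau_d(z^{(a)})_1$ makes $\sigma(y)$ agree with $\tau_d(z^{(a)})$, which it does — and likewise $\sigma(y) = \tau_d(z^{(b)})$. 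Now both $z^{(a)}$ and $z^{(b)}$ are candidates for the ``$z$'' in a $*$-commuting diagram with the same $y$; more precisely, applying $*$-commutativity to the pair $(y, z^{(a)})$ produces a unique $x$ with $\sigma(x) = z^{(a)}$ and $\tau_d(x) = y$, and similarly a unique $x'$ for $(y, z^{(b)})$. But by the sufficiency-style computation above, the first coordinate of such an $x$ is determined by $r_d^{x_1 \cdots x_{n-1}}(x_1) = y_1 = c$ together with $\sigma(x) = z^{(a)}$, forcing $x = a' a x_1 \cdots x_{n-1} w$ and $x' = a'' b x_1 \cdots x_{n-1} w$ for suitable $a', a''$; to get a contradiction I instead run the argument one shift up. The cleanest version: from $\sigma(y) = \tau_d(z^{(a)})$ and $\sigma(y) = \tau_d(z^{(b)})$ we would want a single $y'$ and a single target point so that uniqueness of the lift is violated by $z^{(a)} \neq z^{(b)}$; the right setup is to take $y' := \sigma(z^{(a)})$ does not quite work, so instead observe that the point $x := c' a x_1 \cdots x_{n-1} w$ where $c'$ is arbitrary shows the lift of $(y, \cdot)$ is non-unique because both $a x_1\cdots x_{n-1}w = \sigma(x)$ with $x = (r_d^{x_1\cdots x_{n-1}})^{-1}$-chosen first letter...

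\textbf{Main obstacle.} The genuine difficulty, and the step I expect to need the most care, is the necessity direction: setting up the two points $y, z$ in a $*$-commuting square so that a failure of injectivity of $r_d^{x_1 \cdots x_{n-1}}$ produces either two distinct lifts $x$ (contradicting uniqueness) or no lift (contradicting existence). The sufficiency direction showed that, given $(y,z)$ with $\sigma(y) = \tau_d(z)$, the lift $x$ is forced to be $x_1 z_1 z_2 \cdots$ with $x_1$ solving $r_d^{z_1 \cdots z_{n-1}}(x_1) = y_1$; so if $r_d^{z_1 \cdots z_{n-1}}$ fails to be \emph{surjective}, one picks $y_1$ outside its image and a compatible $y$ (e.g. $y = y_1 \, \tau_d(\sigma z)$, which satisfies $\sigma(y) = \tau_d(z)$ since $\tau_d(\sigma z) = \sigma \tau_d(z)$), and then \emph{no} lift $x$ exists — contradicting $*$-commuting. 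Since $A$ is finite, non-surjectivity is equivalent to non-injectivity, so this single construction suffices and no two-point argument is needed after all; I would present necessity in exactly this streamlined form, verifying only the one coordinate identity $\sigma(y_1 \tau_d(\sigma z)) = \tau_d(z)$ and the observation that $\tau_d(x)_1 = r_d^{z_1\cdots z_{n-1}}(x_1)$ for any $x$ with $\sigma(x) = z$.
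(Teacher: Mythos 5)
Your sufficiency direction is exactly the paper's argument: any candidate lift $x$ is forced by $\sigma(x)=z$ to have the form $x_1 z$, the coordinates $i\ge 2$ of the requirement $\tau_d(x)=y$ hold automatically from $\sigma(y)=\tau_d(z)$, and regressivity gives exactly one admissible first letter $x_1$. For necessity you end up in a genuinely different place than the paper: the paper deduces injectivity of $r_d^{x_1\cdots x_{n-1}}$ from the \emph{uniqueness} clause of $*$-commuting (two letters with the same image give two distinct lifts $a_1z$, $a_2z$ of the same pair) and surjectivity separately from the \emph{existence} clause (lifting the pair $(aw,z)$ with $w=\tau_d(z)$), whereas you prove only surjectivity from the existence clause and invoke finiteness of $A$ to upgrade to bijectivity. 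That is a legitimate economy --- it even shows that regressivity already follows from commuting plus the existence half of $*$-commuting --- but it leans on $|A|<\infty$, which the paper's two-pronged argument does not need.

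There is one concrete error to repair in your streamlined necessity argument. The witness $y=y_1\,\tau_d(\sigma z)$ does \emph{not} satisfy $\sigma(y)=\tau_d(z)$: it satisfies $\sigma(y)=\tau_d(\sigma z)=\sigma(\tau_d(z))$, which is off by one shift, and your parenthetical justification ``since $\tau_d(\sigma z)=\sigma\tau_d(z)$'' establishes the wrong identity. The correct choice, consistent with your own coordinate computation in the sufficiency direction, is $y=y_1\,\tau_d(z)$ (prepend $y_1$ to $\tau_d(z)$ itself); then $\sigma(y)=\tau_d(z)$, and any lift $x=x_1z$ would require $d(x_1z_1\cdots z_{n-1})=y_1$, which is impossible once $y_1$ is chosen outside the range of $r_d^{z_1\cdots z_{n-1}}$, contradicting the existence of a lift. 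You should also state explicitly that for a given word $x_1\cdots x_{n-1}\in A^{n-1}$ you pick some $z\in Z(x_1\cdots x_{n-1})$ so that $r_d^{z_1\cdots z_{n-1}}=r_d^{x_1\cdots x_{n-1}}$, and the abandoned false starts in the middle of the necessity paragraph (the two-point injectivity attempt that trails off) should simply be deleted from the final write-up.
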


\begin{proof}
By Lemma~\ref{taud}, $\tau_d$ commutes with $\sigma$.  Suppose we have $y,z \in A^{\mathbb{N}}$ such that $\sigma (y) = \tau_d(z)$.  Since $d$ is regressive there exists a unique $x_1 \in A$ such that $r_d^{z_1 \cdots z_{n-1}}(x_1)= d(x_1 z_1 \cdots z_{n-1})=y_1$.  Notice that $y_{i+1}=\sigma(y)_i= \tau_d(z)_i= \tau_d(x_1 z)_{i+1}$.  So we have
\[
\tau_d(x_1 z)= d(x_1 z_1 \cdots z_{n-1}) \tau_d(x_1 z)_2 \tau_d(x_1 z)_3 \cdots= y_1 y_2 y_3 \cdots= y
\]
and $\sigma(x_1 z)=z$.  To see that $x_1z$ is unique suppose there exists $w \in A^{\mathbb{N}}$ such that $\tau_d(w)=y$ and $\sigma(w)= z$.  Then $w= az$ for some $a \in A$.  Notice that $ d(az_1 \cdots z_{n-1})= \tau_d(az)_1 =y_1=d(x_1z_1 \cdots z_{n-1})$.  Since $d$ is regressive $a=x_1$.  Therefore $\tau_d$ $*$-commutes with $\sigma$.

Conversely, fix $x_1 \cdots x_{n-1} \in A^{n-1}$.  Suppose for $a_1, a_2 \in A$ we have $r_d^{x_1 \cdots x_{n-1}}(a_1)=r_d^{x_1 \cdots x_{n-1}}(a_2)$.  Then let $z \in Z(x_1 \cdots x_{n-1})$ and observe that
\begin{align*}
\tau_d(a_1 z)_1 &= d(a_1 z_1, \cdots z_{n-1})= r_d^{x_1 \cdots x_{n-1}}(a_1)\\
                          &=r_d^{x_1 \cdots x_{n-1}}(a_2)= d(a_2 z_1, \cdots z_{n-1})= \tau_d(a_2 z)_1.
\end{align*}
For $i \geq 2$ we have $\tau_d(a_1 z)_i= \tau_d(z)_{i-1}= \tau_d(a_2 z)_i$.  So $\tau_d(a_1 z)= \tau_d(a_2 z)$ and $\sigma(a_1 z)=z= \sigma(a_2 z)$. Since $\tau_d$ $*$-commutes with $\sigma$ we have $a_1 z= a_2 z$.  Therefore $r_d^{x_1 \cdots x_{n-1}}$ is injective.  Now let $a \in A$.  Suppose $z \in Z(x_1 \cdots x_{n-1})$ and define $w= \tau_d(z)$.  Then $aw, z \in A^{\mathbb{N}}$ satisfy $\sigma(aw)= \tau_d(z)$.  Since $\tau_d$ and $\sigma$ $*$-commute there exists a unique $v \in A^{\mathbb{N}}$ such that $\sigma(v)=z$ and $\tau_d(v)=aw$.  Since $\sigma(v)=z$, there exists $b \in A$ such that $v=bz$.  So we have $a= \tau_d(v)_1= \tau_d(bz)_1= d(bz_1 \cdots z_{n-1})= d(bx_1 \cdots x_{n-1})$.  So $b \in A$ such that $r_d^{x_1 \cdots x_{n-1}}(b)= d(bx_1 \cdots x_{n-1})= a$.  Therefore $d$ is regressive.
\end{proof}

\section{Local homeomorphisms that $*$-commute with the shift} \label{AI}

In this section we examine properties of sliding block codes that are local homomorphisms and $*$-commute with the shift.  In Theorem~\ref{last} we show that this class of functions is precisely the $k$-fold covering maps defined from regressive block maps.

\begin{defi}
Let $X$ be a topological space, $\phi: X \to X$ be a function and $k \in \mathbb{N}$.  We define the sets $Z_k^{\phi}:= \{y \in X: |\phi^{-1}(y)| = k \}$ and $Z_{\geq k}^{\phi}:= \{y \in X: |\phi^{-1}(y)| \geq k \}$.
\end{defi}

\begin{rem} \label{notempty}
Let $\phi: A^{\mathbb{N}} \to A^{\mathbb{N}}$ commute with $\sigma$ and fix $y \in A^{\mathbb{N}}$.  If there exists $x \in A^{\mathbb{N}}$ such that $\phi (x)=y$, then $\sigma(y)= \sigma (\phi(x))= \phi(\sigma(x))$.  This shows that if $\phi^{-1}(y) \neq \emptyset$, then $\phi^{-1}(\sigma(y)) \neq \emptyset$.
\end{rem}

\begin{defi}
Let $X \subseteq A^{\mathbb{N}}$.  We say that $X$ is \emph{shift invariant} if $\sigma(X)=X$.
\end{defi}

\begin{prop} \label{preim}
If $\phi: A^{\mathbb{N}} \to A^{\mathbb{N}}$ $*$-commutes with the shift map $\sigma$, then $Z_k^{\phi}$ is shift invariant for all $k \in \mathbb{N}$. 
\end{prop}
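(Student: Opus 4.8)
The plan is to use the defining property of $*$-commuting to set up a bijection between the fibers $\phi^{-1}(y)$ and $\phi^{-1}(\sigma(y))$ for an arbitrary $y \in A^{\mathbb{N}}$. Once such a bijection is established, $|\phi^{-1}(y)| = |\phi^{-1}(\sigma(y))|$, and it follows immediately that $y \in Z_k^{\phi}$ if and only if $\sigma(y) \in Z_k^{\phi}$; combined with the surjectivity of $\sigma$ on $A^{\mathbb{N}}$ this gives $\sigma(Z_k^{\phi}) = Z_k^{\phi}$, which is the claim. So the whole proof reduces to constructing this fiberwise bijection.

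First I would fix $y \in A^{\mathbb{N}}$ and handle the trivial case: if $\phi^{-1}(y) = \emptyset$, then by Remark~\ref{notempty} (applied in both directions, since $*$-commuting implies commuting and we will also want the reverse implication) one checks $\phi^{-1}(\sigma(y)) = \emptyset$ as well, so both $y$ and $\sigma(y)$ lie outside $Z_{\geq 1}^{\phi}$ and in particular outside every $Z_k^{\phi}$ with $k \geq 1$; the case $k$ such that $Z_k^\phi$ is defined for $k\ge 1$ is the only relevant one. (One should note the reverse direction of Remark~\ref{notempty} needs a small argument: if $\phi^{-1}(\sigma(y)) \ne \emptyset$, pick $w$ with $\phi(w) = \sigma(y)$; since $\phi$ commutes with $\sigma$ we have $\sigma$ of something equals $\sigma(y)$, but that does not directly produce a preimage of $y$ — here is where $*$-commuting is genuinely needed, not just commuting.)

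Now assume $\phi^{-1}(y) \ne \emptyset$. Define a map $\Phi : \phi^{-1}(y) \to \phi^{-1}(\sigma(y))$ by $\Phi(x) = \sigma(x)$; this is well defined since $\phi(\sigma(x)) = \sigma(\phi(x)) = \sigma(y)$. For the inverse direction, take any $z \in \phi^{-1}(\sigma(y))$, so $\phi(z) = \sigma(y)$, i.e. $\sigma(y) = \phi(z)$. Apply the $*$-commuting property to the pair $(y, z)$ — we need $\sigma(y) = \phi(z)$, which is exactly what we have, matching the roles $S = \sigma$, $T = \phi$ (so $S(y) = T(z)$): there is a unique $x \in A^{\mathbb{N}}$ with $\phi(x) = y$ and $\sigma(x) = z$. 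This $x$ lies in $\phi^{-1}(y)$, and the assignment $z \mapsto x$ is a well-defined map $\Psi : \phi^{-1}(\sigma(y)) \to \phi^{-1}(y)$. The uniqueness clause in the definition of $*$-commuting is precisely what makes $\Psi$ single-valued, and it also gives $\Psi \circ \Phi = \mathrm{id}$ and $\Phi \circ \Psi = \mathrm{id}$: for $\Phi\circ\Psi$, $\sigma(\Psi(z)) = z$ by construction; for $\Psi\circ\Phi$, $\Psi(\sigma(x))$ is the unique preimage-data associated to $\sigma(x)$, and $x$ itself satisfies $\phi(x)=y$, $\sigma(x)=\sigma(x)$, so uniqueness forces $\Psi(\sigma(x)) = x$. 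Hence $|\phi^{-1}(y)| = |\phi^{-1}(\sigma(y))|$.

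Finally I would assemble the conclusion: for each $k$, $y \in Z_k^\phi \iff |\phi^{-1}(y)| = k \iff |\phi^{-1}(\sigma(y))| = k \iff \sigma(y) \in Z_k^\phi$. Thus $\sigma(Z_k^\phi) \subseteq Z_k^\phi$; conversely, given $y \in Z_k^\phi$, the sequence $\sigma(y) \cdot$ (prepend any letter) $=: y'$ satisfies $\sigma(y') = y$, and by the equivalence just shown $y' \in Z_k^\phi$, so $y \in \sigma(Z_k^\phi)$. Therefore $\sigma(Z_k^\phi) = Z_k^\phi$. I expect the only real subtlety — the ``main obstacle,'' though it is mild — is making sure the $*$-commuting property is invoked with $S$ and $T$ assigned to $\sigma$ and $\phi$ in the correct order so that the hypothesis $S(y) = T(z)$ reads $\sigma(y) = \phi(z)$, and then carefully extracting both well-definedness and mutual-inverseness of $\Phi, \Psi$ from the single uniqueness statement; no computation beyond that is required.
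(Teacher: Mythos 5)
Your proof is correct and follows essentially the same route as the paper: both arguments use the $*$-commuting square to pair off elements of $\phi^{-1}(y)$ with elements of $\phi^{-1}(\sigma(y))$, you by packaging this as an explicit mutually inverse pair $\Phi,\Psi$, the paper by running the two counting inequalities ($\geq k$ and $\leq k$) separately. Your version is a clean reorganization of the same idea, and your observation that the reverse direction of Remark~\ref{notempty} genuinely needs $*$-commuting (not just commuting) is accurate.
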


\begin{proof}
Let $y \in A^{\mathbb{N}}$ and fix $k \in \mathbb{N}$ such that  $\sigma(y) \in Z_k^{\phi}$.  Then $\sigma(y)$ has $k$ preimages under $\phi$ and we define $\{ z^i\}^k_{i=1} = \phi^{-1}(\sigma(y))$ (see Figure 1 below).  Since $\sigma$ and $\phi$ $*$-commute, for each $z^i$ there exists a unique $x^i$ such that $\phi(x^i)=y$ and $\sigma(x^i)=z^i$.  So $|\phi^{-1}(y)| \geq k$, but we want to show that $|\phi^{-1}(y)|=k$.  Suppose $x \in \phi^{-1}(y)$.  Then $\phi(\sigma(x))=\sigma(\phi(x))=\sigma(y)$.  So $\sigma(x) \in \phi^{-1}(\sigma(y))= \{ z^i\}^k_{i=1}$, and there exists $i$ such that $\sigma(x)=z^i$.  Hence $\phi(x)=y$, $\sigma(x)=z^i$, but $x^i$ is the unique element with those properties thus $x=x^i$.  Therefore $|\phi^{-1}(y)|=k$ that means $y \in Z_k^{\phi}$.

\[\begin{tikzpicture}[scale=0.8]
    \node[inner sep=2pt, anchor = east] at (-6.5,0) {(1):=};
    \node[inner sep=0pt, circle, fill=black] (a) at (-4, 2) [draw] {.}; 
    \node[inner sep=3pt, anchor = south] at (a.north) {$x^i$};
    \node[inner sep=0pt, circle, fill=black] (b) at (-6, 0) [draw] {.}; 
    \node[inner sep=3pt, anchor = east] at (b.west) {$y$};
    \node[inner sep=0pt, circle, fill=black] (c) at (-4, -2) [draw] {.}; 
    \node[inner sep=3pt, anchor = north] at (c.south) {$\sigma(y)$};
    \node[inner sep=0pt, circle, fill=black] (d) at (-2, 0) [draw] {.}; 
    \node[inner sep=3pt, anchor = west] at (d.east) {$z^i$};
     \draw[style=semithick, style=dashed, -latex] (a.south west)--(b.north east) node[pos=0.5, anchor=south, inner sep=4pt]{$\phi$};
      \draw[style=semithick, -latex] (b.south east)--(c.north west) node[pos=0.5, anchor=south, inner sep=3pt]{$\sigma$};
      \draw[style=semithick, style=dashed, -latex] (a.south east)--(d.north west) node[pos=0.5, anchor=south, inner sep=4pt]{$\sigma$};
      \draw[style=semithick, -latex] (d.south west)--(c.north east) node[pos=0.5, anchor=south, inner sep=4pt]{$\phi$};

   \node[inner sep=2pt, anchor = east] at (1.5,0) {(2):=};
    \node[inner sep=0pt, circle, fill=black] (a) at (4, 2) [draw] {.}; 
    \node[inner sep=3pt, anchor = south] at (a.north) {$x^i$};
    \node[inner sep=0pt, circle, fill=black] (b) at (2, 0) [draw] {.}; 
    \node[inner sep=3pt, anchor = east] at (b.west) {$y$};
    \node[inner sep=0pt, circle, fill=black] (c) at (4, -2) [draw] {.}; 
    \node[inner sep=3pt, anchor = north] at (c.south) {$\sigma(y)$};
    \node[inner sep=0pt, circle, fill=black] (d) at (6, 0) [draw] {.}; 
    \node[inner sep=3pt, anchor = west] at (d.east) {$\sigma(x^i)$};
     \draw[style=semithick, style=dashed, -latex] (a.south west)--(b.north east) node[pos=0.5, anchor=south, inner sep=3pt]{$\phi$};
      \draw[style=semithick, -latex] (b.south east)--(c.north west) node[pos=0.5, anchor=south, inner sep=3pt]{$\sigma$};
      \draw[style=semithick, style=dashed, -latex] (a.south east)--(d.north west) node[pos=0.5, anchor=south, inner sep=4pt]{$\sigma$};
      \draw[style=semithick, -latex] (d.south west)--(c.north east) node[pos=0.5, anchor=south, inner sep=4pt]{$\phi$};
\end{tikzpicture}\]

Conversely, let $y \in A^{\mathbb{N}}$ and fix $k \in \mathbb{N}$ such that $y \in Z_k^{\phi}$.  Then define $\{x^i \}^k_{i=1}= \phi^{-1}(y)$ (see Figure 2 above).  Suppose $w \in \phi^{-1}(\sigma(y))$ (which exists by Remark~\ref{notempty}).  Since $\sigma$ and $\phi$ $*$-commute there exists $x$ such that $\phi(x)=y$ and $\sigma(x)=w$.  However $\{x^i \}^k_{i=1}= \phi^{-1}(y)$ so $x = x^i$ for some $i$.  So for each $w \in \phi^{-1}(\sigma(y))$, $w= \sigma(x^i)$ for some $i$.  So $|\phi^{-1}(\sigma(y))| \leq k$.  Suppose $|\phi^{-1}(\sigma(y))| < k$, then there exists $x^i, x^j \in \phi^{-1}(y)$ with $i \neq j$ such that $\sigma(x^i)= \sigma(x^j)= z$, say.  So we have $y,z \in A^{\mathbb{N}}$ such that $\sigma(y)=\phi(z)$ and $x^i \neq x^j$ such that $\phi(x^i)=y=\phi(x^j)$ and $\sigma(x^i)=z=\sigma(x^j)$.  Since $\phi$ and $\sigma$ $*$-commute $x^i=x^j$, that is a contradiction.  Therefore $|\phi^{-1}(\sigma(y))| = k$ which means $\sigma(y) \in Z_k^{\phi}$.
\end{proof}

\begin{prop} \label{open}
If $\phi: A^{\mathbb{N}} \to A^{\mathbb{N}}$ is a local homeomorphism, then $Z^{\phi}_{\geq k}$ is open in $A^{\mathbb{N}}$ for all $k \in \mathbb{N}$.
\end{prop}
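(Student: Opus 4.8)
The plan is to show that every point of $Z^{\phi}_{\geq k}$ has an open neighborhood contained in $Z^{\phi}_{\geq k}$. So fix $y \in Z^{\phi}_{\geq k}$ and choose $k$ distinct preimages $x^1, \ldots, x^k \in \phi^{-1}(y)$.

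First I would invoke the local homeomorphism hypothesis at each $x^i$ to obtain an open neighborhood $U_i$ of $x^i$ such that $\phi(U_i)$ is open in $A^{\mathbb{N}}$ and $\phi|_{U_i} : U_i \to \phi(U_i)$ is a homeomorphism. Next, since $A^{\mathbb{N}}$ is Hausdorff and the $x^1, \ldots, x^k$ are finitely many distinct points, I can find pairwise disjoint open sets $W_1, \ldots, W_k$ with $x^i \in W_i$; replacing $U_i$ by $U_i \cap W_i$ we may assume the $U_i$ are pairwise disjoint. This replacement is harmless: $U_i \cap W_i$ is open in $U_i$, so its image under the homeomorphism $\phi|_{U_i}$ is open in $\phi(U_i)$ and hence open in $A^{\mathbb{N}}$, and $\phi$ restricted to $U_i \cap W_i$ is still a homeomorphism onto that image.

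Then I would set $V := \bigcap_{i=1}^{k} \phi(U_i)$, which is an open neighborhood of $y$. For any $y' \in V$ and each $i$, since $\phi|_{U_i}$ maps $U_i$ onto $\phi(U_i) \supseteq V$, there is a point $z^i \in U_i$ with $\phi(z^i) = y'$; because the $U_i$ are pairwise disjoint, the points $z^1, \ldots, z^k$ are distinct, so $|\phi^{-1}(y')| \geq k$, i.e. $y' \in Z^{\phi}_{\geq k}$. Hence $V \subseteq Z^{\phi}_{\geq k}$, and since $y$ was arbitrary, $Z^{\phi}_{\geq k}$ is open in $A^{\mathbb{N}}$.

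The argument is essentially routine; the only point requiring a little care is the shrinking step — one must check both that finitely many distinct points of the Hausdorff space $A^{\mathbb{N}}$ can be separated by pairwise disjoint open sets and that, after shrinking, each $\phi(U_i)$ is still open so that the finite intersection $V$ is open and contains $y$. One could alternatively take the $U_i$ to be cylinder sets, exploiting that $A^{\mathbb{N}}$ has a basis of clopen sets, but the Hausdorff separation argument already suffices.
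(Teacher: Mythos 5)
Your proof is correct and follows essentially the same route as the paper's: separate finitely many preimages by disjoint open sets on which $\phi$ is a homeomorphism onto an open image, intersect the images to get an open neighborhood of $y$, and pull back to find $k$ distinct preimages of each nearby point. The only (harmless) difference is that you work with just $k$ chosen preimages rather than the full fiber, which neatly sidesteps the question of whether $\phi^{-1}(y)$ is finite, and you explicitly verify that the shrunken sets still have open images — a small point the paper leaves implicit.
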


\begin{proof}
Let $y \in A^{\mathbb{N}}$ and fix $k \in \mathbb{N}$ such that $y \in Z^{\phi}_{\geq k}$.  Then there exists $l \geq k$ such that  $\phi^{-1}(y)=\{x_i\}^l_{i=1}$ where each $x_i$ is distinct.  Therefore $y \in Z^{\phi}_l$.  Since $\phi$ is a local homeomorphism, for each $x_i$ there exists a neighborhood $U_i$ containing $x_i$ such that $x_j$ is not an element of $U_i$ for $i \neq j$, $y \in \phi(U_i)$ for each $i$, and $\phi(U_i)$ is open in $A^{\mathbb{N}}$.  Since $A^{\mathbb{N}}$ is Hausdorff, let $x_i \in V_i$ for each $i$ and $V_i \cap V_j = \emptyset$ for each $i \neq j$.  Then define $W_i= V_i \cap U_i$.  Thus the set $\{W_i\}^l_{i=1}$ are pairwise disjoint.  Now let $W= \bigcap^l_{i=1} \phi(W_i)$.  Then $W$ is open in $A^{\mathbb{N}}$ and $y \in W$.  We want to show that $W \subseteq Z^{\phi}_{\geq k}$.  Let $z \in W$.  Then $z \in \phi(W_i)$ for each $i$, so there exists $w_i \in W_i$ such that $\phi(w_i)=z$.  Since $\{W_i\}^l_{i=1}$ are pairwise disjoint, each $w_i$ is distinct.  Therefore $z$ has at least $l$ preimages.  Hence $z \in Z^{\phi}_{\geq l} \subseteq Z^{\phi}_{\geq k}$, thus $W \subseteq Z^{\phi}_{\geq k}$.  Therefore $Z^{\phi}_{\geq k}$ is open.
\end{proof}

\begin{rem} \label{wholeX}
It is important to note that the only shift invariant open sets in $A^{\mathbb{N}}$ are $\emptyset$ and $A^{\mathbb{N}}$.  Any non-empty open set $U$ contains a basic open set $Z(x_1, \cdots, x_l)$ for some $l \in \mathbb{N}$.  If $\sigma(U)=U$, then $\sigma^k(U)=U$ for any $k \in \mathbb{N}$.  So $\sigma^l(Z(x_1, \cdots, x_l))=A^{\mathbb{N}} \subseteq U$.  
\end{rem}

\begin{lem} \label{MN}
If $\phi: A^{\mathbb{N}} \to A^{\mathbb{N}}$ is a local homeomorphism, then there exists an $M \in \mathbb{N}$ such that $\{Z(\mu): \mu \in A^M \}$ is a finite covering of $A^{\mathbb{N}}$ by disjoint sets and $\phi$ is a homeomorphism on each $Z(\mu)$.
\end{lem}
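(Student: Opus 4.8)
The plan is to deduce the statement from the compactness of $A^{\mathbb{N}}$ together with the tree structure of the cylinder sets. First I would fix an arbitrary $x \in A^{\mathbb{N}}$. Since $\phi$ is a local homeomorphism there is an open set $U \ni x$ with $\phi(U)$ open in $A^{\mathbb{N}}$ and $\phi|_U \colon U \to \phi(U)$ a homeomorphism. Because $\{Z(\mu) : \mu \in A^*\}$ is a basis, I may choose $\mu^{(x)} \in A^*$ with $x \in Z(\mu^{(x)}) \subseteq U$. Restricting a homeomorphism onto an open set to an open subset of its domain again yields a homeomorphism onto an open subset (the inverse restricts, and a homeomorphism is an open map), so $\phi(Z(\mu^{(x)}))$ is open in $A^{\mathbb{N}}$ and $\phi|_{Z(\mu^{(x)})}$ is a homeomorphism onto it. Ranging over all $x$, the family $\{Z(\mu^{(x)}) : x \in A^{\mathbb{N}}\}$ is an open cover of the compact space $A^{\mathbb{N}}$, so it has a finite subcover $Z(\mu^{(1)}), \dots, Z(\mu^{(r)})$. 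I would then set $M := \max_{1 \le j \le r} |\mu^{(j)}|$.

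Next I would check that this $M$ works. Let $\nu \in A^M$. For each $j$ we have $|\mu^{(j)}| \le M$, so $Z(\nu)$ is either contained in $Z(\mu^{(j)})$ or disjoint from it (compare Remark~\ref{samelength}). Since the $Z(\mu^{(j)})$ cover $A^{\mathbb{N}}$ and $Z(\nu) \neq \emptyset$, there is some $j$ with $Z(\nu) \cap Z(\mu^{(j)}) \neq \emptyset$, and hence $Z(\nu) \subseteq Z(\mu^{(j)})$. As $Z(\nu)$ is open in $Z(\mu^{(j)})$ and $\phi|_{Z(\mu^{(j)})}$ is a homeomorphism onto the open set $\phi(Z(\mu^{(j)}))$, the restriction $\phi|_{Z(\nu)}$ is a homeomorphism onto the open set $\phi(Z(\nu))$. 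Finally, $A^M$ is finite, the sets $Z(\nu)$ with $\nu \in A^M$ are pairwise disjoint and clopen, and $A^{\mathbb{N}} = \bigsqcup_{\nu \in A^M} Z(\nu)$, so $\{Z(\nu) : \nu \in A^M\}$ is the desired finite covering of $A^{\mathbb{N}}$ by disjoint sets on each of which $\phi$ is a homeomorphism.

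I do not expect a serious obstacle: this is essentially a routine compactness argument. The only two points that warrant a line of justification are that the restriction of a homeomorphism onto an open set to an open subdomain is still a homeomorphism with open image, and the ``nested-or-disjoint'' dichotomy for cylinder sets whose defining words are at least as long as the $\mu^{(j)}$; both follow immediately from the definitions, and the latter is already implicit in Remark~\ref{samelength} and Lemma~\ref{phicont}. If one prefers to avoid invoking that restrictions of homeomorphisms are homeomorphisms, one can instead argue directly that $\phi|_{Z(\nu)}$ is a continuous injection from the compact set $Z(\nu)$ onto the Hausdorff set $\phi(Z(\nu))$ and apply \cite[Theorem 5.8]{jKel55}, exactly as in the proof of Theorem~\ref{wplocalhomeo}.
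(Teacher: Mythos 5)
Your proof is correct and follows essentially the same route as the paper: cover $A^{\mathbb{N}}$ by cylinder sets on which $\phi$ restricts to a homeomorphism, extract a finite subcover by compactness, pass to the common length $M$, and observe that each $Z(\nu)$ with $\nu\in A^M$ lies inside one of the chosen cylinders. You are somewhat more explicit than the paper about why such a cover by cylinders exists and about the nested-or-disjoint dichotomy, but the argument is the same.
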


\begin{proof}
Let $\{W_{\alpha}\}$ be a covering basic open sets of $A^{\mathbb{N}}$ such that $\phi$ is a homeomorphism on each set and let $\{W_i\}^n_{i=1}$ be a finite subcover.  By Remark~\ref{samelength} there exists $M \in \mathbb{N}$ and $T \subseteq A^M$ such that $\bigcup_{i=1}^n W_i = \bigsqcup_{\nu \in T} Z(\nu)$.  Since $\{W_i\}^n_{i=1}$ is a cover, $T= A^M$.  Observe that for each $\nu \in A^M$, $Z(\nu) \subseteq W_i$, therefore $\phi$ is a homeomorphism on each $Z(\nu)$.
\end{proof}

\begin{prop}\label{ksurj}
If the sliding block code $\phi: A^{\mathbb{N}} \to A^{\mathbb{N}}$ is a local homeomorphism that $*$-commutes with the shift map $\sigma$, then $\phi$ is surjective and there exists $k \in \mathbb{N}$ such that $\phi$ is $k$-to-$1$.
\end{prop}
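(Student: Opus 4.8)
The plan is to combine three facts already available: the fibers of $\phi$ are uniformly bounded in size (from the finite covering of Lemma~\ref{MN}), the sets $Z^{\phi}_{\ge k}$ are open (Proposition~\ref{open}, using that $\phi$ is a local homeomorphism), and the sets $Z^{\phi}_k$ are shift invariant (Proposition~\ref{preim}, using that $\phi$ $*$-commutes with $\sigma$). Since Remark~\ref{wholeX} says the only shift invariant open subsets of $A^{\mathbb{N}}$ are $\emptyset$ and $A^{\mathbb{N}}$, it will suffice to locate one nonempty $Z^{\phi}_k$ that happens to coincide with $Z^{\phi}_{\ge k}$.

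First I would apply Lemma~\ref{MN} to obtain $M \in \mathbb{N}$ with $A^{\mathbb{N}} = \bigsqcup_{\mu \in A^M} Z(\mu)$ and $\phi$ injective on each $Z(\mu)$. Writing $N := |A|^M$, every fiber $\phi^{-1}(y)$ meets each of the $N$ pieces $Z(\mu)$ in at most one point, so $|\phi^{-1}(y)| \le N$ for all $y$; hence $A^{\mathbb{N}} = \bigsqcup_{j=0}^{N} Z^{\phi}_j$. Now let $k$ be the largest index $j \in \{0,1,\dots,N\}$ with $Z^{\phi}_j \ne \emptyset$; this exists because $A^{\mathbb{N}} \ne \emptyset$. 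By maximality $Z^{\phi}_j = \emptyset$ for all $j > k$, so $Z^{\phi}_{\ge k} = Z^{\phi}_k$, and this set is nonempty. If $k$ were $0$ we would have $Z^{\phi}_0 = A^{\mathbb{N}}$, i.e. $\phi$ would have empty range, which is impossible since $A^{\mathbb{N}} \ne \emptyset$; so $k \ge 1$. Then Proposition~\ref{open} shows $Z^{\phi}_k = Z^{\phi}_{\ge k}$ is open, Proposition~\ref{preim} shows it is shift invariant, and Remark~\ref{wholeX} forces $Z^{\phi}_k = A^{\mathbb{N}}$. Thus every point of $A^{\mathbb{N}}$ has exactly $k$ preimages, so $\phi$ is $k$-to-$1$ and, since $k \ge 1$, surjective.

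I do not expect a serious obstacle here. The one step that needs a little care is the identity $Z^{\phi}_{\ge k} = Z^{\phi}_k$: it is precisely this that lets the openness of $Z^{\phi}_{\ge k}$ be transferred to the shift invariant set $Z^{\phi}_k$, and it relies essentially on the uniform fiber bound from Lemma~\ref{MN} (without which there might be no largest fiber size to take as $k$). Everything else is routine bookkeeping with the cited results.
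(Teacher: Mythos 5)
Your proposal is correct and follows essentially the same route as the paper: bound the fiber sizes via Lemma~\ref{MN}, take $k$ maximal with $Z^{\phi}_k \neq \emptyset$ so that $Z^{\phi}_k = Z^{\phi}_{\geq k}$, and then combine Proposition~\ref{open}, Proposition~\ref{preim}, and Remark~\ref{wholeX} to conclude $Z^{\phi}_k = A^{\mathbb{N}}$. The only (cosmetic) difference is that you rule out $k=0$ before invoking the invariance argument, whereas the paper deduces surjectivity afterward by contradiction; your ordering is slightly cleaner but the argument is the same.
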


\begin{proof}
Since $\phi$ is a local homeomorphism, by Lemma~\ref{MN} there exists an $M \in \mathbb{N}$ such that $\phi$ is a homeomorphism on each $Z(\mu)$ for all $\nu \in A^M$.  Observe that for any $l \in \mathbb{N}$ such that $l > |A^M|$ we have $Z_l^{\phi}= \emptyset$.  Let $k := \operatorname{max} \{l: Z_l^{\phi} \neq \emptyset \}$ and notice that $Z_k^{\phi} =Z_{\geq k}^{\phi}$.  Since $\phi$ $*$-commutes with $\sigma$, by Proposition~\ref{preim} we have $\sigma(Z_k^{\phi}) = Z_k^{\phi}$.  By Proposition~\ref{open} $Z_k^{\phi}=Z_{\geq k}^{\phi}$ is open and by Remark~\ref{wholeX} $Z_k^{\phi} =A^{\mathbb{N}}$.  Thus every element of $A^{\mathbb{N}}$ has exactly $k$ preimages under $\phi$.

Suppose $\phi$ is not surjective.  Then there exists $y \in A^{\mathbb{N}}$ such that $y \in Z_0$.  We have just proven that every element of $A^{\mathbb{N}}$ must have the same number of preimages under $\phi$.  Then $k=0$ which means $\phi$ is not defined for any element in $A^{\mathbb{N}}$.  Thus $\phi$ must be surjective.
\end{proof}

\begin{defi}
Let $p: E \to B$ be a continuous surjective function.  The open set $U$ of $B$ is said to be \emph{evenly covered} by $p$ if the inverse image $p^{-1}(U)$ can be written as the union of disjoint open sets $V_{\alpha}$ in $E$ such that for each $\alpha$, the restriction of $p$ to $V_{\alpha}$ is a homeomorphism of $V_{\alpha}$ onto $U$.
\end{defi}

\begin{defi}
Let $p: E \to B$ be a continuous surjective function.  If every point $b \in B$ has a neighborhood $U$ that is evenly covered by $p$, then $p$ is called a \emph{covering map} and $E$ is said to be a \emph{covering space} of $B$.  If $p^{-1}(b)$ has $k$ elements for every $b \in B$, then $E$ is called a \emph{$k$-fold covering} of $B$.  The condition that $p$ be a local homeomorphism does not suffice to ensure that $p$ is a covering map (see \cite[Chapter 9, page 338, Example 2]{Mun}).
\end{defi}

\begin{exam}
The shift $\sigma$ is a $|A|$-fold covering map.
\end{exam}

\begin{exam}
Define $V:= Z(0) \cup Z(1)$ and $W := Z(2) \cup Z(3)$.  Observe that for both the $\tau_d$ functions from Example~\ref{counterex} and Example~\ref{wpnotr} we have $\tau_d(V) = A^{\mathbb{N}}=\tau_d(W)$.  Therefore the $\tau_d$ functions are $2$-fold covering maps.
\end{exam}

\begin{prop} \label{kcm}
If the sliding block code $\phi: A^{\mathbb{N}} \to A^{\mathbb{N}}$ is a local homeomorphism that $*$-commutes with the shift map $\sigma$, then $\phi$ is a $k$-fold covering map.
\end{prop}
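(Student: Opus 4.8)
The plan is to produce, for an arbitrary point $y \in A^{\mathbb{N}}$, an explicit clopen neighborhood of $y$ that is evenly covered by $\phi$. First I would invoke Proposition~\ref{ksurj} to fix the integer $k$ for which $\phi$ is surjective and $k$-to-$1$, so that $\phi^{-1}(y) = \{x^1, \dots, x^k\}$ consists of exactly $k$ distinct points; and I would invoke Lemma~\ref{MN} to fix $M \in \mathbb{N}$ so that $\{Z(\mu) : \mu \in A^M\}$ partitions $A^{\mathbb{N}}$ into clopen sets on each of which $\phi$ restricts to a homeomorphism onto its image. I would also record the standard fact that a local homeomorphism is an open map, so that $\phi(Z(\mu))$ is open; being moreover the continuous image of the compact set $Z(\mu)$ in the Hausdorff space $A^{\mathbb{N}}$, it is also closed, hence clopen, for every $\mu \in A^M$.

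Next I would observe that the preimages $x^1, \dots, x^k$ lie in $k$ distinct cylinders $Z(\mu_1), \dots, Z(\mu_k)$ with $\mu_i \in A^M$: if two of them lay in the same $Z(\mu)$, they would be distinct points with the same image under the injective map $\phi|_{Z(\mu)}$, a contradiction. Then I would set $U := \bigcap_{i=1}^k \phi(Z(\mu_i))$, a clopen set containing $y$, and $V_i := Z(\mu_i) \cap \phi^{-1}(U)$ for $i = 1, \dots, k$. The $V_i$ are open and pairwise disjoint because the $Z(\mu_i)$ are, and since $\phi|_{Z(\mu_i)}$ is a bijection of $Z(\mu_i)$ onto $\phi(Z(\mu_i)) \supseteq U$, each restriction $\phi|_{V_i}$ is a homeomorphism of $V_i$ onto $U$.

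It then remains to check that $\phi^{-1}(U) = \bigsqcup_{i=1}^k V_i$; the inclusion $\supseteq$ is immediate, and for $\subseteq$ I would take $x \in \phi^{-1}(U)$, put $z := \phi(x) \in U$, and use that $z \in \phi(Z(\mu_i))$ for every $i$ to obtain a point $w_i \in Z(\mu_i)$ with $\phi(w_i) = z$; since the $w_i$ lie in distinct cylinders they are distinct, and as $z$ has exactly $k$ preimages we get $\phi^{-1}(z) = \{w_1, \dots, w_k\}$, forcing $x = w_i \in V_i$ for some $i$. This exhibits $U$ as evenly covered by $\phi$, so $\phi$ is a covering map, and since it is $k$-to-$1$ it is a $k$-fold covering map. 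The only step requiring genuine care is the identification of $\phi^{-1}(U)$ with the disjoint union of the $V_i$: this is precisely where the $k$-to-$1$ conclusion of Proposition~\ref{ksurj} enters, as without it the argument would yield only an inclusion, leaving the possibility of "extra" sheets over $U$.
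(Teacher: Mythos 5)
Your proof is correct and follows the same overall strategy as the paper's: take disjoint neighborhoods of the $k$ points in the fiber over $y$ on which $\phi$ restricts to a homeomorphism, intersect their images to form $U$, and pull back to get the sheets $V_i$. The two differences are worth noting. First, you obtain your disjoint neighborhoods from the clopen partition $\{Z(\mu):\mu\in A^M\}$ of Lemma~\ref{MN}, whereas the paper takes arbitrary neighborhoods from the definition of local homeomorphism and then shrinks them using the Hausdorff property; your choice is slightly cleaner since disjointness comes for free. Second, and more substantively, you explicitly justify the identity $\phi^{-1}(U)=\bigsqcup_{i=1}^k V_i$ by counting: any $z\in U$ already has one preimage in each of the $k$ disjoint cylinders, and since $\phi$ is exactly $k$-to-$1$ by Proposition~\ref{ksurj} there can be no others. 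The paper asserts this identity with only a ``Notice that,'' and your closing remark correctly identifies it as the one step where the $k$-to-$1$ conclusion is genuinely needed --- without it one gets only $\phi^{-1}(U)\supseteq\bigsqcup V_i$ and extra sheets could remain. So your write-up is, if anything, more complete than the original at exactly the right spot.
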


\begin{proof}
The function $\phi$ is continuous by definition and surjective by Proposition~\ref{ksurj}.  Let $y \in A^{\mathbb{N}}$, then $\phi^{-1}(y)= \{ x_i \}_{i=1}^k$ for some $k \in \mathbb{N}$ by Proposition~\ref{ksurj}.  Since $\phi$ is  a local homeomorphism there exists an open neighborhood $W_i$ of $x_i$ such that $\phi: W_i \to \phi(W_i)$ is a homeomorphism and $\phi(W_i)$ for $1 \leq i \leq k$.  Since $A^{\mathbb{N}}$ is Hausdorff we may define open sets $W_i'$ such that $x_i \in W_i' \subseteq W_i$ and $\{W_i' \}_{i=1}^k$ are pointwise disjoint.  Notice that $\phi|_{W_i'}$ is a homeomorphism onto its image and $\phi(W_i')$ is open.  Let $U := \cap_{i=1}^k \phi(W_i')$.  Then $U$ is an open set such that $y \in U$.  Define $V_i := \phi^{-1}(U) \cap W_i'$.  Then $x_i \in V_i$ and the $V_i$'s are open and pairwise disjoint.  Notice that $\phi^{-1}(U)= \bigsqcup_{i=1}^k V_i$.  Observe that $\phi|_{V_i}$ is a homeomorphism and $\phi(V_i)= \phi(\phi^{-1}(U) \cap W_i')= U \cap \phi(W_i')= U$.  Hence $\phi(V_i)$ is onto $U$ for each $i$.  Therefore $A^{\mathbb{N}}$ is evenly covered by $\phi$.  By Theorem~\ref{ksurj} $\phi$ is $k$-to-$1$ for some $k \in \mathbb{N}$, therefore $\phi$ is a k-fold covering map.
\end{proof}

\begin{thm} \label{last}
A sliding block code $\phi: A^{\mathbb{N}} \to A^{\mathbb{N}}$ is a local homeomorphism and $*$-commutes with the shift map $\sigma$ if and only if $\phi$ is a $k$-fold covering map defined from a regressive block map.  
\end{thm}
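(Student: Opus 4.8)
The plan is to assemble this theorem entirely from results established earlier in the paper; it is essentially a packaging statement and no new ideas are needed. The two directions are handled separately, and each reduces to an application of Theorem~\ref{refiffstar} together with one additional observation.

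For the forward implication, suppose the sliding block code $\phi$ is a local homeomorphism that $*$-commutes with $\sigma$. By Theorem~\ref{classify} we may write $\phi = \tau_d$ for a block map $d \colon A^n \to A$, taking $n$ minimal as justified by Proposition~\ref{prop-d-uni}. Since $\tau_d$ $*$-commutes with $\sigma$, Theorem~\ref{refiffstar} forces $d$ to be regressive. Since $\tau_d$ is moreover a local homeomorphism that $*$-commutes with $\sigma$, Proposition~\ref{kcm} shows that $\tau_d$ is a $k$-fold covering map. Combining these, $\phi$ is a $k$-fold covering map defined from a regressive block map.

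For the converse, suppose $\phi = \tau_d$ where $d$ is a regressive block map and $\tau_d$ is a $k$-fold covering map. First I would note that every covering map is a local homeomorphism: given $x \in A^{\mathbb{N}}$, pick a neighborhood $U$ of $\phi(x)$ that is evenly covered, write $\phi^{-1}(U) = \bigsqcup_{\alpha} V_{\alpha}$ with each $\phi|_{V_{\alpha}}$ a homeomorphism onto $U$, and let $V_{\alpha_0}$ be the (open) sheet containing $x$; then $\phi$ restricted to the open neighborhood $V_{\alpha_0}$ of $x$ is a homeomorphism onto the open set $U$. Hence $\phi$ is a local homeomorphism. Second, since $d$ is regressive, Theorem~\ref{refiffstar} gives immediately that $\tau_d$ $*$-commutes with $\sigma$. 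Therefore $\phi$ is a local homeomorphism that $*$-commutes with $\sigma$.

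I do not expect any genuine obstacle here: all the substantive work was carried out in Sections~\ref{LH}--\ref{AI}, and the only step in this proof that requires even a line of argument is the standard remark that a covering map is a local homeomorphism, which is built into the definition of ``evenly covered.''
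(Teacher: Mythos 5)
Your proposal is correct and follows essentially the same route as the paper: both directions reduce to Theorem~\ref{refiffstar} together with Proposition~\ref{kcm} for the forward implication, and to the standard fact that a covering map is a local homeomorphism (which you helpfully spell out and the paper takes by definition) for the converse.
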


\begin{proof}
Since $\phi$ is a sliding block code that $*$-commutes with the shift, by Theorem~\ref{refiffstar} there exists a regressive block map $d$ such that $\tau_d = \phi$.  Since $\phi$ is a local homeomorphism that $*$-commutes with the shift, by Proposition~\ref{kcm} $\phi$ is a $k$-fold covering map.

Conversely, if $\phi$ is a $k$-fold covering map, then $\phi$ is a local homeomorphism by definition.  Since $\phi$ is defined from a regressive block map, by Theorem~\ref{refiffstar} $\phi$ $*$-commutes with the shift.
\end{proof}

\end{document}